\begin{document}

\begin{abstract}
We extract a new class of paracontact paracomplex Riemannian manifolds  arising from certain cone construction, call  it para-Sasaki-like Riemannian manifold and give explicit examples. 
We define a hyperbolic extension of a paraholomorphic paracomplex Riemannian manifold, which is a local product of two Riemannian spaces with equal dimensions, showing that it is a para-Sasaki-like Riemannian manifold. If the starting paraholomorphic paracomplex Riemannian manifold is complete Einstein with negative scalar curvature then its hyperbolic extension is a complete Einstein para-Sasaki-like Riemannian manifold with negative scalar curvature thus producing new examples of complete Einstein Riemannian manifold with negative scalar curvature.
\end{abstract}

\keywords{Almost paracontact Riemannian manifolds, holomorphic product manifold, Einstein manifolds}
\subjclass[2010]{
53C15, 
53C25, 
53C50
}

\title[Para-Sasaki-like Riemannian manifolds and new Einstein metrics]
{Para-Sasaki-like  Riemannian manifolds and new Einstein metrics}

\author[S. Ivanov]{Stefan Ivanov}
\address[S. Ivanov]{Department of Geometry, Faculty of Mathematics and Informatics,
St. Kliment Ohridski University of Sofia, 5 James Bourchier Blvd,
1164 Sofia, Bulgaria \&
Institute of Mathematics and Informatics, Bulgarian Academy of Sciences, Bulgaria}
\email{ivanovsp@fmi.uni-sofia.bg}
\author[H. Manev]{Hristo Manev}
\address[H. Manev]{Department of Medical Informatics, Biostatistics and E-Learning, Faculty of Public
Health, Medical University of Plovdiv, 15A 	Vasil Aprilov Blvd, 4002 Plovdiv,
Bulgaria}
\email{hristo.manev@mu-plovdiv.bg}
\author[M. Manev]{Mancho Manev}
\address[M. Manev]{Department of Algebra and Geometry, Faculty of Mathematics and
Informatics, University of Plovdiv Paisii Hilendarski, 24 Tzar Asen St, 4000 Plovdiv,
Bulgaria \&
Department of Medical Informatics, Biostatistics and E-Learning, Faculty of Public
Health, Medical University of Plovdiv, 15A 	Vasil Aprilov Blvd, 4002 Plovdiv,
Bulgaria}
\email{mmanev@uni-plovdiv.bg}



\frenchspacing

\newcommand{\A}{\allowbreak{}}
\newcommand{\ie}{\textit{i.e.}, }
\newcommand{\X}{\mathfrak{X}}
\newcommand{\W}{\mathcal{W}}
\newcommand{\F}{\mathcal{F}}
\newcommand{\T}{\mathcal{T}}
\newcommand{\TT}{\mathfrak{T}}
\newcommand{\M}{(M,\allowbreak{}\ff,\allowbreak{}\xi,\allowbreak{}\eta,\allowbreak{}g)}
\newcommand{\Lf}{(G,\ff,\xi,\eta,g)}
\newcommand{\R}{\mathbb{R}}
\newcommand{\CC}{\mathbb{C}}
\newcommand{\s}{\mathfrak{S}}
\newcommand{\n}{\nabla}
\newcommand{\ff}{\phi}
\newcommand{\id}{{\rm id}}
\newcommand{\tr}{{\rm tr}}
\newcommand{\al}{\alpha}
\newcommand{\bt}{\beta}
\newcommand{\gm}{\gamma}
\newcommand{\lm}{\lambda}
\newcommand{\om}{\omega}
\newcommand{\ta}{\theta}
\newcommand{\ea}{\varepsilon_\alpha}
\newcommand{\eb}{\varepsilon_\beta}
\newcommand{\eg}{\varepsilon_\gamma}
\newcommand{\sx}{\mathop{\mathfrak{S}}\limits_{x,y,z}}
\newcommand{\norm}[1]{\left\Vert#1\right\Vert ^2}
\newcommand{\nf}{\norm{\n \ff}}
\newcommand{\Span}{\mathrm{span}}
\newcommand{\D}{\mathrm{d}}
\newcommand{\ddr}{\tfrac{\D}{\D r}}
\newcommand{\g}{\check{g}}
\newcommand{\nn}{\check{\n}}
\newcommand{\cP}{\check{P}}
\newcommand{\chh}{\check{h}}
\newcommand{\cwh}{\widetilde{\check{h}}}
\newcommand{\wh}{\widetilde{h}}
\newcommand{\wg}{\widetilde{g}}

\newcommand{\thmref}[1]{The\-o\-rem~\ref{#1}}
\newcommand{\propref}[1]{Pro\-po\-si\-ti\-on~\ref{#1}}
\newcommand{\secref}[1]{\S\ref{#1}}
\newcommand{\lemref}[1]{Lem\-ma~\ref{#1}}
\newcommand{\dfnref}[1]{De\-fi\-ni\-ti\-on~\ref{#1}}
\newcommand{\corref}[1]{Corollary~\ref{#1}}



\newtheorem{thm}{Theorem}[section]
\newtheorem{lem}[thm]{Lemma}
\newtheorem{prop}[thm]{Proposition}
\newtheorem{cor}[thm]{Corollary}
\newtheorem{corr}[thm]{Remark}

\theoremstyle{definition}
\newtheorem{defn}{Definition}[section]
\newtheorem{conv}{Convention}

\hyphenation{Her-mi-ti-an man-i-fold ah-ler-ian pa-ra-con-tact
Rie-mann-ian para-com-plex}




\begin{center}
\date{\today}
\end{center}

\maketitle


\setcounter{tocdepth}{3}
 \tableofcontents


\section*{Introduction}


In 1976 I. Sato \cite{Sato76} introduced the concept of almost
paracontact Riemannian manifolds as analogue of almost contact
Riemannian manifolds  \cite{Blair02,Sas60}. Later on, in 1980 S. Sasaki \cite{Sas80}
defined the notion of an almost paracontact Riemannian manifold of
type $(p, q)$, where $p$ and $q$ are the
numbers of the multiplicity of the structure eigenvalues $1$ and
$-1$, respectively. In addition, there is a simple eigenvalue 0.

In this paper we consider a $(2n+1)$-dimensional almost paracontact Riemannian manifolds
of type $(n, n)$, \ie  $p = q = n$ 
and the paracontact distribution  can be considered
as a $2n$-dimensional
almost paracomplex Riemannian distribution 
with almost paracomplex 
structure
and a structure group $O(n)\times O(n)$.
The paracomplex geometry has been studied since the
first papers by P.\,K. Rashevskij \cite{Rash48}, P. Libermann \cite{Lib52}, and E.\,M. Patterson
\cite{Patt54} until now, from several different points of view.
In particular, the almost paracomplex Riemannian manifolds are classified by M. Staikova and K. Gribachev in
\cite{StaGri92}.

We call these $(2n+1)$-dimensional manifolds
\emph{almost paracontact  paracomplex Riemannian manifolds} (or briefly \emph{apcpcR manifolds}). 
A natural example is the  direct product of an almost paracomplex Riemannian manifold 
with the real line. Accordingly, any real hypersurface of an almost paracomplex Riemannian manifold admits an almost paracontact  paracomplex Riemannian structure.

An odd-dimensional manifold  $(M,\phi,\xi,\eta)$
is said to be an \emph{almost paracontact manifold}
if $\phi$ is a $(1, 1)$-tensor field, 
$\xi$ is a vector field and $\eta$ is a 1-form,
which satisfy the following conditions:
\begin{equation}\label{str}
\phi^2=\id-\eta\otimes\xi,\quad \eta(\xi)=1 \quad  \text{consequently} \quad \ff\xi = 0,\quad \eta\circ\ff=0.
\end{equation}

If $H=\ker(\eta)$ is the paracontact distribution of the tangent bundle of
$(M,\phi,\xi,\eta)$,
the endomorphism $\phi$ induces an almost product structure (in particular, an almost paracomplex structure) on each fiber of $H$ \cite{KaWi},
so that $(H,\phi)$ is a $2n$-dimensional almost product distribution (in particular, an almost paracomplex distribution).
Let us note that an almost paracomplex structure is an almost product structure $P$, \ie $P^2=\id$ and $P\neq\pm\id$,
such that the eigenvalues $+1$ and $-1$ of $P$ have  the same multiplicity $n$
\cite{CFG}, \ie $\tr P=0$ follows.

In the present work we consider the case of almost paracontact paracomplex manifolds, \ie its paracontact distribution is equipped with an almost paracomplex structure. According to S. Sasaki \cite{Sas80}, these manifolds are called almost paracontact manifolds of type $(n,n)$. 
For them we have $\tr{\ff}=0$.

Let $g$ be an associated Riemannian metric such that
\begin{equation*}\label{str2}
g(x,\xi)=\eta(x),\qquad g(\phi x,\phi y)=g(x,y)-\eta(x)\eta(y).
\end{equation*}
Then $(M,\phi,\xi,\eta,g)$ is called an \emph{almost paracontact paracomplex
Riemannian manifold} 
\cite{Sato76}. An almost paracontact paracomplex Riemannian manifold $(M,\ff,\xi,\eta)$  is called
\emph{paracontact paracomplex Riemannian manifold} 
if, in addition  the following  condition holds\cite{Sato77}
\begin{equation}\label{paracont}
2g(x,\phi y)=\left( \mathcal{L}_\xi g \right)(x,y)=\left( \nabla_x \eta \right)(y)+\left( \nabla_y \eta \right)(x),
\end{equation}
where $\mathcal L$ denotes the Lie derivative and $\nabla$ is the Levi-Civita connection of the Riemannian metric $g$.

The aim of the  paper is to define  a new class of paracontact paracomplex Riemannian manifolds which arise  under the condition that a certain Riemannian cone over it has a paraholomorphic paracomplex Riemannian (briefly, \emph{phpcR}) structure. We call it para-Sasaki-like Riemannian manifold and give explicit examples. Studying the structure of the para-Sasaki-like Riemannian spaces we show  that the paracontact form $\eta$ is closed and a para-Sasaki-like Riemannian manifold locally can be considered as a certain product of the real line with a phpcR manifold which locally is the  Riemannian product of two Riemannian spaces with equal dimension. We also get that the curvature of the para-Sasaki-like manifolds is completely determined by the curvature of the underlying local phpcR manifold as well as  the Ricci curvature in the direction of $\xi$ is equal to $-2n$ {while in the Sasaki case it is $2n$.}
In this sense, the para-Sasaki-like manifolds can be considered as the counterpart  of the Sasaki manifolds;
 the skew symmetric part of $\nabla \eta$ vanishes while in  the Sasaki case the symmetric (Killing) part vanishes

We define a hyperbolic extension of a (complete) phpcR manifold, which looks like as a certain warped product, showing that it is a (complete) para-Sasaki-like Riemannian manifold. Moreover, we  show that  if the starting phpcR manifold is a complete Einstein manifold with negative scalar curvature then its hyperbolic extension is a complete Einstein para-Sasaki-like Riemannian manifold with negative scalar curvature thus producing new examples of a complete Einstein Riemannian manifold with negative scalar curvature  (see Theorem~\ref{extein} and Example~3).

In the last section we define and study paracontact conformal/homothetic deformations extracting a subclass which preserve the para-Sasaki-like condition. In the case of paracontact homothetic deformation of a para-Sasaki-like Riemannian space  we obtain that the Ricci tensor is an invariant.


\begin{conv}
\label{conven} 
Let $\M$ be an apcpcR 
manifold.
\begin{enumerate}[a)]
\item 
We shall denote the smooth vector fields on
$M$ by $x$, $y$, $z$, $w$, \ie $x,y,z,w\in\X(M)$.

\item We shall use $X$, $Y$, $Z$, $W$
to denote smooth horizontal vector fields on $M$, \ie $%
X,Y,Z,W\in H=\ker(\eta)$.




\end{enumerate}
\end{conv}

\noindent{\bf Acknowledgments.} \vskip 0.1cm
\noindent The research of S.\,I.  is partially supported   by Contract DH/12/3/12.12.2017,
Contract 80-10-12/18.03.2020 with the Sofia University ``St. Kliment Ohridski''\
and
the National Science Fund of Bulgaria, National Scientific Program ``VIHREN'', Project No. KP-06-DV-7.
The research of H.\,M. is partially supported by the National Scientific Program ``Young Researchers and Post-Doctorants'' and
the project MU19-FMI-020 of the Scientific Research Fund, University of Plovdiv ``Paisii Hilendarski''.
The research of M.\,M. is partially supported by projects MU19-FMI-020 and
FP19-FMI-002 of the Scientific Research Fund, University of Plovdiv ``Paisii Hilendarski''.


\section{Almost paracontact  paracomplex Riemannian manifolds}

Let $(M,\ff,\xi,\eta)$ be a $(2n+1)$-dimension\-al almost paracontact paracomplex manifold, 
 \ie the eigenspaces of $\ff$ on the paracomplex distribution $H=\ker(\eta)$ have equal dimension $n$.

%

An almost paracontact paracomplex manifold 
is a \emph{normal almost paracontact paracomplex manifold} if the corresponding
almost paracomplex structure $\check P$  on $\check M=M\times
\R$
defined by
\begin{equation}\label{concom}
\check PX=\ff X,\qquad \check P\xi=r\ddr,\qquad \check P\ddr=\tfrac{1}{r}\xi
\end{equation}
is integrable (\ie
$(\check M,\check P)$  is a paracomplex manifold) \cite{CFG}.
The almost
paracontact paracomplex structure is normal if and only if the Nijenhuis
tensor $N$ of $(\ff,\xi,\eta)$  vanishes, where $N$ is defined by
\[
N = [\ff, \ff]-\D{\eta}\otimes\xi,\quad [\ff, \ff](x, y)=\left[\ff
x,\ff y\right]+\ff^2\left[x,y\right]-\ff\left[\ff
x,y\right]-\ff\left[x,\ff y\right],
\]
and  $[\ff,\ff]$ is the Nijenhuis torsion of $\ff$  \cite{Sato76}.

The associated metric $\widetilde{g}$ of $g$ on an almost paracontact paracomplex Riemannian manifold $\M$
 is defined by
$$\widetilde{g}(x,y)=g(x,\ff y)\allowbreak+\eta(x)\eta(y).$$ It is a
pseudo-Riemannian metric of signature $(n+1,n)$ (see e.g.  \cite{ManTav57}).

The almost paracontact  paracomplex Riemannian man\-i\-fold
 is known also as an almost
paracontact Riemannian mani\-fold of type $(n,n)$
\cite{ManSta01}. 
The structure group of these manifolds
is $O(n)\times O(n)\times I(1)$, where $O(n)$ and
$I(1)$ are the orthogonal matrix of size $n$ and the unit matrix
of size $1$, respectively.


The covariant derivatives of $\ff$, $\xi$, $\eta$ with respect to
the Levi-Civita connection $\n$ of $g$ play a fundamental role in the
differential geometry on the almost paracontact Riemannian manifolds.  The
structure tensor $F$ of type (0,3) on $\M$ is defined by
\begin{equation}\label{F=nfi}
F(x,y,z)=g\bigl( \left( \nabla_x \ff \right)y,z\bigr).
\end{equation}
It has the following properties \cite{ManSta01}:
\begin{equation}\label{F-prop}
F(x, y, z) = F(x, z, y) = -F(x,\ff y,\ff z)+\eta(y)F(x, \xi, z) +
\eta(z)F(x, y, \xi).
\end{equation}
The relations of $\n\xi$ and $\n\eta$ with $F$ are:
\begin{equation}\label{Fxieta}
    \left(\n_x\eta\right)(y)=g\left(\n_x\xi,y\right)=-F(x,\ff y,\xi).
\end{equation}

The  1-forms  associated with $F$:
$
\ta(z)=\sum_{i=1}^{2n}F(e_i,e_i,z)$, $
\ta^*(z)=\sum_{i=1}^{2n}F(e_i,\ff e_i,z)$, $
\om(z)=F(\xi,\xi,z)
$
satisfy the obvious relations $\ta^*\circ\ff=-\ta\circ\ff^2$ and $\om(\xi)=0$.


In \cite{ManTav57}, besides the Nijenhuis tensor $N$ of an almost paracontact Riemannian
structure, it is defined the symmetric (1,2)-tensor
$\widehat N$ as follows: consider the symmetric brackets $\{x,y
\}$ given by
\[
\begin{aligned}
g(\{x,y\},z)=g(\nabla_xy+\nabla_yx,z)=x\,g(y,z)+y\,g(x,z)-z\,g(x,y)+g([z,x],y)+g([z,y],x);
\end{aligned}
\]
set
\[
\{\ff ,\ff\}(x,y)=\{\ff x,\ff y\}+\ff^2\{x,y\}-\ff\{\ff
x,y\}-\ff\{x,\ff y\}
\]
and define the symmetric tensor $\widehat N$ as follows %
\[
\widehat N(x,y)=\{\ff,\ff\}(x,y)-\bigl(\left(\nabla_{x}\eta\right)(y)+\left(\nabla_{x}\eta\right)(y)\bigr)\xi=\{\ff,\ff\}(x,y)-(\mathcal L_{\xi}g)(x,y)\otimes \xi.
\]
%
The tensor $\widehat N$ is also called  \emph{the associated
Nijenhuis tensor} of the almost paracontact Riemannian structure.

\noindent We denote the corresponding tensors of type (0,3) by the same
letters, $N(x,y,z)=g(N(x,y),z)$,  $\widehat N(x,y,z)=g(\widehat
N(x,y),z)$. Both tensors $N$ and $\widehat N$ can be expressed in
terms of the fundamental tensor $F$ as follows 
\begin{gather}
N(x,y,z)=F(\ff x,y,z)-F(\ff y,x,z)-F(x,y,\ff z)+F(y,x,\ff z)
\label{enu}
+\eta(z)\bigl[F(x,\ff y,\xi)-F(y,\ff
x,\xi)\bigr],
\\[4pt]
\widehat N(x,y,z)=F(\ff x,y,z)+F(\ff y,x,z)-F(x,y,\ff z)-F(y,x,\ff z)
+\eta(z)\bigl[F(x,\ff y,\xi)+F(y,\ff x,\xi)\bigr].\label{enhat}
\end{gather}

\subsection{Relation with paraholomorphic paracomplex Riemannian manifolds}
Notice that the $2n$-dimensional  distribution
$H=\ker(\eta)$  is endowed with an almost paracom\-plex
structure
$P=\ff|_H$, a metric $h=g|_H$, where $\ff|_H$, $g|_H$ are  the
restrictions of $\ff$, $g$ on $H$, respectively. The metric
$h$ is compatible with $P$  as follows
\begin{equation}\label{hP}
h(PX,PY)=h(X,Y), \qquad \widetilde{h}(X,Y)=h(X,PY),
\end{equation}
where $\widetilde{h}$ is the associated neutral metric.

We recall that a $2n$-dimensional almost paracomplex manifold
$(N,P,h)$ endowed with a Riemannian metric $h$ 
satisfying \eqref{hP} is known as an almost paracomplex
Riemannian
manifold \cite{CFG,Lib52} or almost product  Riemannian manifold with $\tr\,{P}=0$ 
\cite{StaGri92,StaGriMek87,StaGriMek91}. When the almost product structure $P$ is
parallel with respect to the Levi-Civita connection $\nabla'$ of
the metric $h$, $\nabla'P=0$, then the manifold is known as a
Riemannian $P$-manifold \cite{StaGriMek87}, a locally product Riemannian manifold
or a paraholomorphic paracomplex Riemannian manifold
\cite{Oku67}. In this case the almost
product structure $P$ is integrable.

Let us denote the structure (0,3)-tensor of $(N,P,h)$ as follows
\begin{equation}\label{1.4}
F'(X,Y,Z)=h\bigl(\left(\n'_X P\right)Y,Z\bigr),
\end{equation}

The equalities  $P^2=\id$ and \eqref{1.4}  imply the
properties:
\begin{equation*}\label{1.6}
F'(X,Y,Z)=F'(X,Z,Y)=-F'(X,PY,PZ),\qquad
F'(X,PY,Z)=-F'(X,Y,PZ).
\end{equation*}
The  1-forms $\ta'$ and $\ta'^*$  are
given by
$    \ta'(Z)=\sum_{i=1}^{2n}F'(e_i,e_i,Z)$, $
    \ta'^*(Z)=\sum_{i=1}^{2n}F'(e_i,Pe_i,Z).
$
\subsection{The  case of parallel structures} The simplest case  of
almost paracontact Riemannian  manifolds is when the structures
are $\n$-parallel, $\n\ff=\n\xi=\n\eta=\n g=\n \widetilde{g}=0$,
and it is determined by the condition $F(x,y,z)=0$. In this case
the distribution $H$ is involutive. The corresponding integral
submanifold is a totally geodesic submanifold which inherits a
phpcR structure and the almost
paracontact Riemannian manifold is locally a Riemannian product of
a phpcR manifold with a real interval.

\section{Para-Sasaki-like 
Riemannian manifolds}

In this section we consider the Riemannian cone over an apcpcR manifold and determine a para-Sa\-sa\-ki-like
para\-contact paracomplex Riemannian manifold 
with the condition that its
Riemannian cone is a Riemannian manifold with a paraholomorphic paracomplex structure.

\subsection{Paraholomorphic  Riemannian cone}
Let $\M$ be a  $(2n+1)$-dimensional apcpcR 
manifold. We consider the Riemannian cone $\mathcal{C}(M)=M\times \R^+$ over $M$
equipped with the almost paracomplex structure $\check P$ determined in
\eqref{concom} and the Riemannian metric defined by
\begin{equation}\label{barg}
\begin{array}{l}
\check{g}\left(\left(x,a\ddr\right),\left(y,b\ddr\right)\right)
=r^2g(x,y)|_H+\eta(x)\eta(y)+ab
=r^2g(x,y)+(1-r^2)\eta(x)\eta(y)+ab,
\end{array}
\end{equation}
where $r$ is the coordinate on $\R^+$ and $a$, $b$ are
$C^{\infty}$ functions on $M\times \R^+$.


Using the general Koszul formula
\begin{equation}\label{koszul}
\begin{split}
2g(\nabla_xy,z)=xg(y,z)+yg(z,x)-zg(x,y)
+g([x,y],z)+g([z,x],y)+g([z,y],x),
\end{split}
\end{equation}
we calculate from \eqref{barg} that the  non-zero components of
the Levi-Civita connection $\nn$ of the Riemannian metric $\check
g$ on $\mathcal{C}(M)$ are given by

\[
\begin{array}{l}
    \g\left(\nn_X Y,Z\right)=r^2 g\left(\n_X Y,Z\right),\qquad
    \g\left(\nn_X Y,\ddr\right)=-r g\left(X, Y\right), %
    \\[4pt]
    \g\left(\nn_X Y,\xi\right)=r^2 g\left(\n_X
    Y,\xi\right)+\tfrac{1}{2}\left(r^2-1\right)\D\eta(X,Y),
\\[4pt]
    \g\left(\nn_X \xi,Z\right)=r^2 g\left(\n_X \xi,
    Z\right)-\tfrac{1}{2}\left(r^2-1\right)\D\eta(X,Z),
    \\[4pt]
    \g\left(\nn_{\xi} Y,Z\right)=
    r^2 g\left(\n_{\xi}Y,Z\right)-\tfrac{1}{2}(r^2-1)\D\eta(Y,Z),
    \\[4pt]
    \g\left(\nn_{\xi} Y,\xi\right)=-g\left(\n_{\xi}\xi,Y\right), \qquad
    \g\left(\nn_{\xi} \xi,Z\right)= g\left(\n_{\xi}\xi,Z\right),
    \\[4pt]
    \g\left(\nn_X \ddr,Z\right)=r g\left(X,Z\right), \qquad
    \g\left(\nn_{\ddr} Y,Z\right)=r g\left(Y,Z\right).
\end{array}
\]


Applying \eqref{concom}, we get 
that
the non-zero components of 
$\nn \check P$
are given by
\[
\begin{array}{rl}
    &\g\left(\left(\nn_X \check P\right)Y,Z\right)
    =r^2 g\left(\left(\n_X \ff\right)Y,Z\right),
    \\[4pt]
    &\g\left(\left(\nn_X  \check P\right)Y,\xi\right)
    =r^2 \left\{g\left(\left(\n_X \ff\right)Y,\xi\right)
    +g(X,Y)\right\}+\tfrac{1}{2}\left(r^2-1\right)\D\eta(X,\ff Y),%
    \\[4pt]
    &\g\left(\left(\nn_X  \check P\right)Y,\ddr\right)=r\left\{g\left(\n_X
    \xi,Y\right)
    -g\left(X,\ff Y\right)\right\}-\tfrac{1}{2r}(r^2-1)\D\eta(X,Y),%
\\[4pt]
    &\g\left(\left(\nn_X  \check P\right)\xi,Z\right)=-r^2\left\{g\left(\n_X \xi,\ff
    Z\right)
    -g\left(X,Z\right)\right\}+\tfrac{1}{2}(r^2-1)\D\eta(X,\ff Z),
    \\[4pt]
    &\g\left(\left(\nn_X  \check P\right)\ddr,Z\right)=r\left\{g\left(\n_X \xi,
    Z\right)
    -g\left(X,\ff Z\right)\right\}-\tfrac{1}{2r}(r^2-1)\D\eta(X,Z),
    \\[4pt]
    &\g\left(\left(\nn_{\xi}  \check P\right)Y,Z\right)
    =r^2g\left(\left(\n_{\xi}\ff\right)Y,Z\right)
    -\tfrac{1}{2}(r^2-1)\left\{\D\eta(\ff Y, Z)-\D\eta(Y,\ff Z)\right\},
    \\[4pt]
    &\g\left(\left(\nn_{\xi}  \check P\right)Y,\xi\right)=-g(\n_{\xi}\xi,\ff Y),
    \qquad
    \g\left(\left(\nn_{\xi}  \check P\right)\xi,Z\right)=-g\left(\n_{\xi}\xi, \ff
    Z\right),
    \\[4pt]
    &\g\left(\left(\nn_{\xi}  \check P\right)Y,\ddr\right)=
    \tfrac{1}{r}g\left(\n_{\xi}\xi, Y\right), \qquad  \g\left(\left(\nn_{\xi}
    \check P\right)\ddr,Z\right)=\tfrac{1}{r}g\left(\n_{\xi}\xi, Z\right).
\end{array}
\]
\begin{prop}\label{defs}
The  Riemannian cone $\mathcal{C}(M)$ over an apcpcR
manifold $\M$  is a Riemannian {manifold with a paraholomorphic paracomplex structure} if and only
if the following conditions hold
\begin{alignat}{1}
&F(X,Y,Z)=F(\xi,Y,Z)=\om(Z)=0\label{sasaki},\\[4pt]
&F(X,Y,\xi)=-g(X,Y)\label{sasaki0}.
\end{alignat}
\end{prop}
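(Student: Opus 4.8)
The plan is to use the characterization of paraholomorphic paracomplex Riemannian manifolds recalled above: such a manifold is exactly an almost paracomplex Riemannian manifold whose product structure is parallel for the Levi-Civita connection (and parallelism already forces integrability). Hence the cone $\mathcal{C}(M)$ equipped with $\cP$ and $\g$ from \eqref{concom} and \eqref{barg} is a Riemannian manifold with a paraholomorphic paracomplex structure if and only if $\nn\cP=0$. Since the components of $\nn\cP$ have just been computed, the whole proof reduces to deciding for which apcpcR structures on $M$ all of those components vanish identically on $\mathcal{C}(M)$, that is, for every value of the cone coordinate $r\in\R^+$.

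The key mechanism I would exploit is that each listed component of $\nn\cP$ is, after clearing the occasional factor $1/r$, a polynomial in $r^2$ whose coefficients are tensors on $M$ built from $F$, $g$, $\D\eta$ and $\n_\xi\xi$. Requiring such an expression to vanish for all $r>0$ forces the coefficient of $r^2$ and the constant term to vanish separately, and this is exactly what produces two conditions out of each mixed component. Concretely: the purely horizontal component $\g\left((\nn_X\cP)Y,Z\right)=r^2F(X,Y,Z)$ gives $F(X,Y,Z)=0$; the components of $\nn_X\cP$ valued along $\xi$ and $\ddr$ split into a constant part, forcing $\D\eta$ to vanish on $H$, and an $r^2$-part, which after rewriting $g(\n_X\xi,Y)=-F(X,\ff Y,\xi)$ by \eqref{Fxieta} and replacing $\ff Y$ by an arbitrary horizontal vector (using $\ff^2=\id$ on $H$) yields precisely $F(X,Y,\xi)=-g(X,Y)$; the component $\g\left((\nn_\xi\cP)Y,Z\right)$ splits into $F(\xi,Y,Z)=0$ together with the symmetry $\D\eta(\ff Y,Z)=\D\eta(Y,\ff Z)$; and the remaining $\xi$-row components all reduce to $\n_\xi\xi=0$, equivalently $\om=0$ since $\om(Z)=-g(\n_\xi\xi,\ff Z)$ and $g(\n_\xi\xi,\xi)=0$. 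Collecting these gives \eqref{sasaki} and \eqref{sasaki0}.

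For the converse I would assume \eqref{sasaki} and \eqref{sasaki0} and show every component vanishes for all $r$. The point needing care is that the parallelism also demanded the auxiliary identities $\D\eta|_H=0$ and $\n_\xi\xi=0$, which are not literally among \eqref{sasaki}--\eqref{sasaki0}; I would show they are nonetheless forced. Indeed \eqref{Fxieta} gives $\D\eta(X,Y)=(\n_X\eta)(Y)-(\n_Y\eta)(X)=-F(X,\ff Y,\xi)+F(Y,\ff X,\xi)$, and substituting \eqref{sasaki0} together with the $g$-self-adjointness of $\ff$ makes this equal to $g(X,\ff Y)-g(\ff Y,X)=0$, so $\D\eta|_H=0$ and the $\D\eta$-symmetry above is automatic; likewise $\om=0$ gives $\n_\xi\xi=0$ by the same relation $\om(Z)=-g(\n_\xi\xi,\ff Z)$. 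Feeding $F(X,Y,Z)=F(\xi,Y,Z)=0$, $F(X,Y,\xi)=-g(X,Y)$, $\D\eta|_H=0$ and $\n_\xi\xi=0$ back into the ten computed components makes each of them vanish identically in $r$, which closes the equivalence.

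The main obstacle I anticipate is not the algebra of any single component but the bookkeeping that ties the two directions together: one must recognize that the extra constraints $\D\eta|_H=0$ and $\n_\xi\xi=0$ surfacing from the $r$-wise splitting are consequences of \eqref{sasaki0} and $\om=0$ via \eqref{Fxieta} and \eqref{F-prop}, rather than independent conditions. This is precisely what makes the compact pair \eqref{sasaki}--\eqref{sasaki0} equivalent to the full system $\nn\cP=0$ instead of something strictly stronger.
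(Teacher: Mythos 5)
Your proof is correct, and its overall route --- characterize a phpcR structure on the cone by $\nn\cP=0$ and then decide when the precomputed components vanish --- is the same as the paper's; where you genuinely differ is in the mechanism used to eliminate the $r$-dependence. You view each component (after clearing a factor of $1/r$) as a linear polynomial in $r^2$ with $r$-independent tensorial coefficients, so vanishing for all $r>0$ forces both coefficients to vanish separately; each mixed component then splits into a constant part (giving $\D\eta|_H=0$, respectively the symmetry $\D\eta(\ff Y,Z)=\D\eta(Y,\ff Z)$) and an $r^2$-part (giving \eqref{sasaki0}, $F(\xi,Y,Z)=0$). The paper instead first records the vanishing of the components as the intermediate system \eqref{nxixi}--\eqref{FxiYZ}, which still contains $r$, and then kills $\D\eta$ by a symmetric/antisymmetric splitting valid at each \emph{fixed} $r$: antisymmetrizing \eqref{FXYxi} in $X,Y$, the term $g(X,\ff Y)$ drops out because $\widetilde g$ is symmetric, leaving $\D\eta(X,Y)=\tfrac{r^2-1}{r^2}\,\D\eta(X,Y)$, hence $\D\eta|_H=0$; combined with $\n_\xi\xi=0$ this gives $\D\eta=0$, and back-substitution yields \eqref{sasaki}--\eqref{sasaki0}. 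So your argument is more mechanical and uses parallelism on the whole cone (all $r$), whereas the paper's trick would even work on a single slice $M\times\{r_0\}$; both are rigorous and of comparable length. On the converse you are in fact more explicit than the printed proof: you verify that the auxiliary identities $\D\eta|_H=0$ and $\n_\xi\xi=0$ are consequences of \eqref{sasaki0} and $\om=0$ (via \eqref{Fxieta} and $\om(Z)=-g(\n_\xi\xi,\ff Z)$, together with $\n_\xi\xi\in H$ and the invertibility of $\ff|_H$), which is precisely what is needed for the compact pair \eqref{sasaki}--\eqref{sasaki0} to imply the full system; the paper compresses this point into the phrase ``substitute $\D\eta=0$ into \eqref{FXYxi}--\eqref{FxiYZ}''.
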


\begin{proof}[Proof]
The expressions above  yield that $\nn \check P=0$ on the Riemannian cone
$(\mathcal{C}(M),\check P,\check g)$  
if and only if  the apcpcR 
manifold $\M$ satisfies the following con\-ditions
\begin{alignat}{1}
    &F(X,Y,Z)=0, \qquad \om(Z)=0, \qquad \n_{\xi}\xi=0\label{nxixi}
   \\[4pt]
    &F(X,Y,\xi)=-g(X,Y)-\tfrac{1}{2r^2}\left(r^2-1\right)\D\eta(X,\ff
    Y)\label{FXYxi},\\[4pt]
    &F(\xi,Y,Z)=\tfrac{1}{2r^2}\left(r^2-1\right)\left\{\D\eta(\ff Y,Z)
    -\D\eta(Y,\ff Z)\right\}\label{FxiYZ}.
\end{alignat}

Further, according to \eqref{FXYxi}, we get
$
\left(\n_X\eta\right)(Y)=g(X,\ff Y)+\tfrac{1}{2r^2}\left(r^2-1\right)\D\eta(X,Y),
$
yielding $\D\eta(X,Y)=\tfrac{1}{r^2}\left(r^2-1\right)\D\eta(X,Y)$
since $\widetilde g$ is symmetric. The latter equality shows
$\D\eta(X,Y)=0$ 
yielding
\begin{equation}\label{sasaki1}
\left(\n_X\eta\right)(Y)=g(X,\ff  Y).
\end{equation}
{Therefore \eqref{paracont} holds and $\M$ is a paracontact Riemannian manifold.}

From \eqref{nxixi} we get
$\D\eta(\xi,X)=(\n_{\xi}\eta)(X)-(\n_X\eta)(\xi)=0$. Hence, we have
$\D\eta=0$. Substitute $\D\eta=0$ into \eqref{FXYxi}-\eqref{FxiYZ}
to complete the proof of the proposition.
\end{proof}

\begin{defn}
A manifold $\M$ is said to be \emph{para-Sasaki-like {paracontact para\-complex}
Riemannian manifold} (for short, \emph{para-Sasaki-like Riemannian manifold})
if the structure tensors $\ff, \xi, \eta,
g$ satisfy the equali\-ties \eqref{sasaki} and \eqref{sasaki0}.
\end{defn}

To characterize {para-Sasaki-like Riemannian manifolds} by the structure tensors, we need the
following general formula for any apcpcR manifold $\M$, known from \cite{ManTav57}
%
\begin{equation}\label{nabf}
\begin{split}
g(\n_x\ff)y,z)=F(x,y,z)&=\frac14\bigl[N(\ff x,y,z)+N(\ff x,z,y)
+\widehat N(\ff x,y,z)+\widehat N(\ff x,z,y)\bigr]\\[4pt]
&\phantom{=\ }
-\frac12\eta(x)\bigl[N(\xi,y,\ff z)+\widehat
N(\xi,y,\ff z)+\eta(z)\widehat N(\xi,\xi,\ff y)\bigr].
\end{split}
\end{equation}

The next result determines {the para-Sasaki-like Riemannian  manifolds} by the
structure tensors.

\begin{thm}\label{thm:ssss}
Let $\M$ be an apcpcR 
manifold. The following conditions are equivalent:
\begin{itemize}
\item[a)] The manifold $\M$ is para-Sasaki-like; %
\item[b)] The covariant derivative $\nabla \ff$ satisfies the
equality
\begin{equation}\label{defsl}
\begin{array}{l}
(\nabla_x\ff)y=-g(x,y)\xi-\eta(y)x+2\eta(x)\eta(y)\xi\\[4pt]
 \phantom{(\nabla_x\ff)y}
=-g(\ff x,\ff y)\xi-\eta(y)\ff^2 x;
\end{array}
\end{equation}
\item[c)] The Nijenhuis tensors $N$ and $\widehat N$ satisfy
    the
conditions:
\begin{equation}\label{sasnn}
N=0,\qquad
\widehat N={-4(\widetilde{g}-\eta\otimes\eta)\otimes\xi}.
\end{equation}
\end{itemize}
\end{thm}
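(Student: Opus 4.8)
The plan is to prove the equivalences by showing that each of a), b), c) is equivalent to the single explicit identity
\begin{equation*}
F(x,y,z)=-g(x,y)\eta(z)-g(x,z)\eta(y)+2\eta(x)\eta(y)\eta(z) \tag{$\star$}
\end{equation*}
for the fundamental tensor $F$ of \eqref{F=nfi}. Since $F(x,y,z)=g\bigl((\nabla_x\ff)y,z\bigr)$ and $g$ is non-degenerate, $(\star)$ is literally the inner-product form of \eqref{defsl}, so b)$\,\Leftrightarrow\,(\star)$ is immediate and it remains to prove a)$\,\Leftrightarrow\,(\star)$ and c)$\,\Leftrightarrow\,(\star)$.

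For a)$\,\Rightarrow\,(\star)$ I would first note that \eqref{sasaki}--\eqref{sasaki0} prescribe $F$ on every triple of arguments in $\{\xi,H\}$ \emph{except} $F(x,\xi,\xi)$. The key preliminary is that this last component is forced to vanish: putting $y=\xi$ in \eqref{F-prop} and using $\ff\xi=0$, $\eta(\xi)=1$ gives $0=\eta(z)F(x,\xi,\xi)$, hence $F(x,\xi,\xi)=0$. Writing each argument as $x=\ff^2x+\eta(x)\xi$ and expanding $F$ multilinearly, every surviving term is then read off from \eqref{sasaki}--\eqref{sasaki0}, and collecting them yields exactly $(\star)$. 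The converse $(\star)\,\Rightarrow\,$a) is a one-line substitution of horizontal and $\xi$ arguments into $(\star)$, using $\eta(X)=0$ and $\eta(\xi)=1$.

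For the equivalence with c) I would first record the self-adjointness of $\ff$: the compatibility $g(\ff x,\ff y)=g(x,y)-\eta(x)\eta(y)$ together with $\ff^2=\id-\eta\otimes\xi$ and $\eta\circ\ff=0$ gives $g(\ff x,y)=g(x,\ff y)$, i.e.\ $\widetilde g$ is symmetric. For $(\star)\,\Rightarrow\,$c) I substitute $(\star)$ into \eqref{enu} and \eqref{enhat}; using self-adjointness of $\ff$ and $\eta\circ\ff=0$, all terms of \eqref{enu} cancel in pairs so that $N=0$, while the surviving terms of \eqref{enhat} combine into $\widehat N(x,y,z)=-4g(\ff x,y)\eta(z)$, and the rewriting $g(\ff x,y)=(\widetilde g-\eta\otimes\eta)(x,y)$ gives the second equation of \eqref{sasnn}. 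For c)$\,\Rightarrow\,(\star)$ I substitute \eqref{sasnn} into the master formula \eqref{nabf}: with $N=0$ only the $\widehat N$-terms survive, and since $\widehat N(\cdot,\cdot,z)$ carries a factor $\eta(z)$ with $\eta\circ\ff=0$, the contributions $\widehat N(\xi,y,\ff z)$ and $\widehat N(\xi,\xi,\ff y)$ drop out, leaving $F(x,y,z)=-g(\ff x,\ff y)\eta(z)-g(\ff x,\ff z)\eta(y)$, which expands through the compatibility identity to $(\star)$.

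I expect the main obstacle to be the bookkeeping in the equivalence c)$\,\Leftrightarrow\,(\star)$: the eight-term expressions \eqref{enu}, \eqref{enhat} and the master formula \eqref{nabf} must be substituted and the many terms shepherded through their cancellations. The self-adjointness $g(\ff x,y)=g(x,\ff y)$ is the lever that makes these collapse, so I would establish it at the very outset and use it relentlessly.
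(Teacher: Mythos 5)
Your proposal is correct and follows essentially the same route as the paper's proof: a)\,$\Leftrightarrow$\,b) by checking via \eqref{F-prop} that \eqref{defsl} is equivalent to the defining system \eqref{sasaki}--\eqref{sasaki0}, b)\,$\Rightarrow$\,c) by substituting \eqref{defsl} into \eqref{enu} and \eqref{enhat}, and c)\,$\Rightarrow$\,b) by substituting \eqref{sasnn} into \eqref{nabf}. The only difference is that you spell out the details (the vanishing of $F(x,\xi,\xi)$, the decomposition $x=\ff^2x+\eta(x)\xi$, and the self-adjointness $g(\ff x,y)=g(x,\ff y)$) that the paper leaves as ``easy to check''.
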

\begin{proof}[Proof]
It is easy to check using \eqref{F-prop} that
\eqref{defsl} is equivalent to the system of the equations
\eqref{sasaki} and \eqref{sasaki0} which established the equivalence between
a) and b) in view of Proposition~\ref{defs}.

Substitute  \eqref{defsl} consequently into \eqref{enu} and
\eqref{enhat} to get \eqref{sasnn} which gives the
im\-pli\-cat\-ion
b) $\Rightarrow$ c).

Suppose \eqref{sasnn} holds. Then we get that \eqref{defsl}
follows from \eqref{sasnn} and \eqref{nabf}. This completes the proof.
\end{proof}
%

\begin{cor}
Let $\M$ be {para-Sasaki-like Riemannian manifold}. Then we have: 
\begin{itemize}
\item[a)] the manifold $\M$ is normal paracontact Riemannian manifold, $N=0$, $2\widetilde{g}|_H=\mathcal{L}_\xi g $,
            the fundamental 1-form $\eta$ is closed, $\D\eta=0$
						and the integral             curves of
            $\xi$ are geodesics, $\n_{\xi}\xi=0$; %
\item[b)]  the 1-forms $\theta$ and $\theta^*$ satisfy the
equalities $ \theta=-2n\,\eta$ and $\theta^*=0$.
\end{itemize}
\end{cor}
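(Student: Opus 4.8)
The plan is to derive everything in the corollary directly from the defining equation \eqref{defsl} of Theorem~\ref{thm:ssss}\,b), which is available since para-Sasaki-like is equivalent to that covariant-derivative identity. First I would establish part a). Starting from \eqref{defsl}, I would contract with the metric against $\xi$ and use the basic structure relations \eqref{str}, \eqref{str2} to compute $\n_x\xi$ and $\n_x\eta$. Recalling \eqref{Fxieta}, namely $(\n_x\eta)(y)=-F(x,\ff y,\xi)$, and feeding in the values of $F$ forced by \eqref{sasaki}, \eqref{sasaki0}, I expect to recover $(\n_x\eta)(y)=g(x,\ff y)$ on the whole tangent bundle (this is exactly \eqref{sasaki1} from the proof of Proposition~\ref{defs}, now extended past the horizontal distribution). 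Since $g(x,\ff y)$ is symmetric in $x,y$ by the compatibility \eqref{str2} and $\widetilde g$-symmetry, the skew part of $\n\eta$ vanishes, so $\D\eta(x,y)=(\n_x\eta)(y)-(\n_y\eta)(x)=0$, giving $\D\eta=0$. The relation $2\widetilde g|_H=\mathcal L_\xi g$ then follows because $(\mathcal L_\xi g)(x,y)=(\n_x\eta)(y)+(\n_y\eta)(x)=2g(x,\ff y)$, and on $H$ we have $g(X,\ff Y)=\widetilde h(X,Y)$. For $\n_\xi\xi=0$ I would set $x=\xi$ in the formula for $\n_x\xi$; since $(\n_\xi\eta)(y)=g(\xi,\ff y)=0$ for all $y$, we get $\n_\xi\xi=0$, so integral curves of $\xi$ are geodesics. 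Finally $N=0$ is immediate from Theorem~\ref{thm:ssss}\,c), and normality is its definition.

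For part b) I would simply compute the traces $\ta$ and $\ta^*$ by plugging \eqref{defsl} into their definitions $\ta(z)=\sum_i F(e_i,e_i,z)$ and $\ta^*(z)=\sum_i F(e_i,\ff e_i,z)$. The key is to choose an adapted orthonormal basis: since the structure group is $O(n)\times O(n)\times I(1)$, I would pick a $\ff$-adapted frame $\{e_1,\dots,e_n,\ff e_1,\dots,\ff e_n,\xi\}$ on which $g$ restricts nicely and $\sum_i(\cdots)$ can be evaluated using $g(e_i,e_i)=1$, $g(\ff e_i,\ff e_i)=-1$ (or the appropriate signs dictated by \eqref{str2}), together with $\tr\ff=0$. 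From $F(x,y,z)=-g(x,y)\eta(z)-\eta(y)g(x,z)+2\eta(x)\eta(y)\eta(z)$ (reading off the components of \eqref{defsl}), I expect the first two terms to contribute the $-2n\,\eta$ in $\ta$ and to cancel in $\ta^*$ after using $g(\ff e_i,\ff e_i)=-g(e_i,e_i)$ on $H$.

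The main obstacle, and the place demanding care rather than difficulty, is bookkeeping the signs and the paracomplex/neutral structure when taking traces: because $\ff$ has eigenvalues $\pm1$ with the metric $\widetilde h$ of neutral signature on $H$, the sums $\sum_i g(e_i,e_i)$ and $\sum_i g(\ff e_i,\ff e_i)$ do not behave like ordinary Riemannian traces, so one must track exactly which index sets carry which sign. I would organize the computation by splitting each horizontal vector into its $P=\pm1$ eigencomponents and verifying that the mixed terms coming from $\eta(y)g(x,z)$ vanish on $H$ (since $\eta|_H=0$), leaving only the clean contributions. Once the adapted-frame conventions are fixed, both $\ta=-2n\,\eta$ and $\ta^*=0$ should drop out after a short contraction, consistent with the relation $\ta^*\circ\ff=-\ta\circ\ff^2$ already recorded in the excerpt, which I would use as an internal consistency check.
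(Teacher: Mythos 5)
Your route coincides with the paper's implicit one: the corollary carries no separate proof precisely because part a) is already contained in the proof of Proposition~\ref{defs} (where $\D\eta=0$, the identity \eqref{sasaki1} and $\n_\xi\xi=0$ are established) together with Theorem~\ref{thm:ssss}\,c) for $N=0$, while part b) is a direct trace of \eqref{defsl}. Your part a) is sound: from \eqref{defsl} one reads off $F(x,y,\xi)=-g(\ff x,\ff y)$, hence by \eqref{Fxieta} $(\n_x\eta)(y)=-F(x,\ff y,\xi)=g(x,\ff y)$ on all of $TM$; the symmetry of $g(x,\ff y)$ in $x,y$ gives $\D\eta=0$ and $\mathcal{L}_\xi g=2g(\cdot,\ff\,\cdot)$, whose restriction to $H$ is $2\widetilde{h}=2\widetilde{g}|_H$, and setting $x=\xi$ gives $\n_\xi\xi=0$.

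Part b), however, contains a genuine error in exactly the place you single out as the delicate point. You propose to use $g(\ff e_i,\ff e_i)=-g(e_i,e_i)=-1$ on $H$. This is false: $g$ is the positive definite Riemannian metric, and the compatibility \eqref{str2} gives $g(\ff X,\ff Y)=g(X,Y)$ on $H$, so $g(\ff e_i,\ff e_i)=+1$ in any orthonormal frame. The neutral signature belongs to the associated metric $\widetilde{g}$, which does not enter the definitions of $\theta$ and $\theta^*$ at all; there is no exotic trace behaviour to worry about. The error is not harmless: in your adapted frame $\{e_1,\dots,e_n,\ff e_1,\dots,\ff e_n\}$, applying your sign rule literally gives $\theta(z)=\sum_a\bigl[F(e_a,e_a,z)+F(\ff e_a,\ff e_a,z)\bigr]=\sum_a\bigl[-\eta(z)+\eta(z)\bigr]=0$, contradicting $\theta=-2n\,\eta$. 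The correct computation is purely Riemannian and shorter than you anticipate: from $F(x,y,z)=-g(x,y)\eta(z)-\eta(y)g(x,z)+2\eta(x)\eta(y)\eta(z)$ and $\eta(e_i)=\eta(\ff e_i)=0$ one gets $F(e_i,e_i,z)=-\eta(z)$, hence $\theta=-2n\,\eta$, and $\theta^*(z)=-\eta(z)\sum_i g(e_i,\ff e_i)=-\eta(z)\,\tr(\ff|_H)=0$, where the vanishing comes from $\tr\ff=0$ (equivalently, termwise from $g(e_a,\ff e_a)=0$ in the adapted frame), not from any sign flip of $g$ under $\ff$.
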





\subsection{Example 1: Solvable Lie group as a para-Sasaki-like Riemannian manifold}

Consider the solvable Lie group $G$ of
dimension $2n+1$
 with a basis of left-invariant vector fields $\{e_0,\dots, e_{2n}\}$
 defined by the commutators
\begin{equation}\label{com}
[e_0,e_1]=-e_{n+1},\; \dots,\; [e_0,e_n]=-e_{2n},\;
[e_0,e_{n+1}]=-e_1,\; \dots,\; [e_0,e_{2n}]=-e_n.
\end{equation}
%

Define an invariant
apcpcR 
structure
 on $G$  by
\begin{equation}\label{strEx1}
\begin{array}{rl}
&g(e_i,e_i)=1,
\quad
g(e_i,e_j)=0,\quad i,j\in\{0,1,\dots,2n\},\; i\neq j,
\\[4pt]
&\xi=e_0, \quad \ff  e_1=e_{n+1},\quad  \dots,\quad  \ff
e_n=e_{2n}.
\end{array}
\end{equation}

Using the Koszul formula \eqref{koszul}, we check that
\eqref{sasaki} and  \eqref{sasaki0} 
are fulfilled, \ie it is {para-Sasaki-like}.

Let $e^0=\eta$, $e^1$, $\dots$, $e^{2n}$ be the corresponding dual
1-forms, $e^i(e_j)=\delta^i_j$. From \eqref{com}
it follows that the structure equations of the group are
\begin{equation}\label{comstr}
\begin{array}{llll}
 \D e^0=\D\eta=0,\; &\D e^1=e^{0}\wedge e^{n+1},\;&\dots,\; &\D
e^n=e^{0}\wedge e^{2n},\\[4pt]
&\D e^{n+1}=e^{0}\wedge e^{1}, &\dots, &\D e^{2n}= e^{0}\wedge
e^{n}
\end{array}
\end{equation}
and the para-Sasaki-like  Riemannian structure
has the form
\begin{equation}\label{sas}
g=\sum_{i=0}^{2n}\left(e^i\right)^2,%
\qquad \ff e^0= 0,\ \ff e^1= e^{n+1},\ \dots,\ \ff e^n= e^{2n}.
\end{equation}


The basis of dual 1-forms can be the following
\begin{equation}\label{e^i_Ex1}
\begin{array}{ll}
e^0=\D t, 						\qquad &\quad e^i				=\cosh(t)\D x^i+\sinh(t)\D x^{n+i},\\[4pt]
i\in\{1,2,\dots,n\}, \qquad & e^{n+i}=\sinh(t)\D x^i+\cosh(t)\D x^{n+i}.
\end{array}
\end{equation}
%
%
The 1-forms defined in \eqref{e^i_Ex1} satisfy
\eqref{comstr} and the para-Sasaki-like  Riemannian
metric has the form
\begin{equation}\label{sasmetric}
g=\D t^2+\cosh(2t)\sum_{i=1}^{2n}\left(\D x^i\right)^2+ \sinh(2t)\sum_{i=1}^n\D x^i\D x^{n+i}.
\end{equation}

It follows from \eqref{com}, \eqref{sas}, \eqref{e^i_Ex1} and
\eqref{sasmetric} that the distribution
$H=
\Span\{e_1,\dots,\allowbreak{}e_{2n}\}$ is integrable and the
corresponding integral submanifold can be considered as the
{flat space} $\R^{2n}=\Span\{\D
x^1,\dots,\D x^{2n}\}$
with the  following phpcR
structure  %
\[
P\D x^1=\D x^{n+1},\; \dots,\; P\D x^n=\D x^{2n};\qquad
h=\sum_{i=1}^{2n}(\D x^i)^2, \qquad \widetilde h =2\sum_{i=1}^n\D
x^i\D x^{n+i}.
\]


\subsection{Hyperbolic extension of a paraholomorphic paracomplex Riemannian manifold}

Inspired by Example~1, we proposed the following more general
construction. Let $(N^{2n},J,h,\widetilde{h})$ be a
$2n$-dimen\-sion\-al phpcR
manifold,
\ie the almost pro\-duct structure $P$ has $\tr\, P=0$, acts as an
isometry on the metric $h$, $h(PX,PY)=h(X,Y)$  and it is parallel
with respect to the Levi-Civita connection of $h$. In particular,
the almost paracomplex structure $P$ is integrable. The
associated
neutral pseudo-Riemannian metric $\widetilde{h}$ is defined by
$\widetilde{h}(X,Y)=h(PX,Y)$ and it is also parallel with respect
to the Levi-Civita connection of $h$.

Consider the product manifold $M^{2n+1}=\mathbb R\times N^{2n}$.
Let $\D t$ be the coordinate 1-form on $\mathbb R$ and define an
apcpcR 
structure on
$M^{2n+1}$ as follows
\begin{equation}\label{strsas}
\eta=\D t, \quad \ff |_H
=P, \quad \eta\circ\ff =0,\quad g=\D t^2+\cosh(2t)\,h+\sinh(2t)\,\widetilde{h}.
\end{equation}

\begin{thm}\label{ext}
Let $(N^{2n},P,h,\widetilde{h})$ be a $2n$-dimensional phpcR manifold. Then the product
manifold $M^{2n+1}=\mathbb R\times N^{2n}$ equipped with the
apcpc 
 Riemannian structure defined in
\eqref{strsas} is a para-Sasaki-like  Riemannian
manifold.
If the Riemannian manifold $(N^{2n},h)$ is complete then the para-Sasaki-like Riemannian manifold  $(M^{2n+1},g)=(\mathbb R\times N^{2n},g)$ is complete.
\end{thm}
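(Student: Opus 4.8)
The plan is to prove the first assertion by computing the Levi-Civita connection $\n$ of $g$ on $M^{2n+1}=\R\times N^{2n}$ and then verifying condition b) of \thmref{thm:ssss}, namely \eqref{defsl}. Since $\eta=\D t$ and $g(\cdot,\xi)=\eta$, the Reeb field is $\xi=\frac{\partial}{\partial t}$ and the horizontal distribution $H=\ker\eta$ is the tangent distribution of the $N$-slices. As $F$ and $\n\ff$ are tensorial, it suffices to work with horizontal $X,Y,Z$ that are $t$-independent lifts of vector fields on $N$, so that $[\xi,X]=0$ and $[X,Y]$ stays horizontal. I will use throughout that, by hypothesis, $P$ and the associated metric $\widetilde h$ are both parallel with respect to the Levi-Civita connection $\n'$ of $h$ (so that $\n'$ is simultaneously the Levi-Civita connection of $h$ and of $\widetilde h$), together with $g(X,\ff Y)=\widetilde g(X,Y)$ on $H$, and the fact that, writing $c=\cosh(2t)$, $s=\sinh(2t)$ with $c'=2s$, $s'=2c$, one has $\xi\,g(X,Y)=2\bigl(s\,h+c\,\widetilde h\bigr)(X,Y)=2\,\widetilde g(X,Y)$ on $H$.

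Feeding these into the Koszul formula \eqref{koszul} gives the connection: $\n_\xi\xi=0$; the horizontal part of $\n_X Y$ equals $\n'_X Y$, while its vertical part is $-g(X,\ff Y)\,\xi$ (the latter from $2g(\n_X Y,\xi)=-\xi\,g(X,Y)=-2\widetilde g(X,Y)$); and $\n_X\xi=\n_\xi X=\ff X$ (both horizontal, from $2g(\n_X\xi,Z)=\xi\,g(X,Z)=2\widetilde g(X,Z)$). In the computation of $\n_X Y$ the splitting $g|_H=c\,h+s\,\widetilde h$ decomposes the Koszul expression into an $h$-part and a $\widetilde h$-part, each producing the \emph{same} connection $\n'$, which is exactly where $\n' P=\n'\widetilde h=0$ enters. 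Differentiating $\ff$ with the help of $\n'P=0$, $\ff\xi=0$ and $\ff^2 X=X$ on $H$ then yields $(\n_X\ff)Y=-g(X,Y)\xi$, $(\n_X\ff)\xi=-X$, $(\n_\xi\ff)Y=0$ and $(\n_\xi\ff)\xi=0$; by linearity these combine into precisely \eqref{defsl}, so \thmref{thm:ssss} shows that $\M$ is para-Sasaki-like.

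For the completeness statement I would invoke the Hopf--Rinow theorem and prove that $(M^{2n+1},g)$ is metrically complete. Two elementary estimates drive the argument. First, since $g(v,v)\ge(\D t(v))^2$, the projection $\pi\colon M\to\R$ onto the $t$-factor is $1$-Lipschitz, so $d_g\bigl((t_1,x_1),(t_2,x_2)\bigr)\ge|t_1-t_2|$; hence the $t$-coordinates of any $g$-Cauchy sequence are Cauchy in $\R$ and converge to a finite $t_\infty$. Second, on each slice the restricted metric is $h(A_t\,\cdot\,,\cdot)$ with $A_t=\cosh(2t)\,\id+\sinh(2t)\,P$, an $h$-symmetric positive-definite endomorphism whose eigenvalues are $e^{2t}$ and $e^{-2t}$; thus on any compact $t$-interval $I$ there is $\lambda>0$ with $g|_H\ge\lambda\,h$, so that the $g$-length of a curve confined to $\pi^{-1}(I)$ bounds below $\sqrt\lambda$ times the $h$-length of its projection to $N$.

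Given a $g$-Cauchy sequence $(t_k,x_k)$, the $t_k$ eventually lie in a fixed compact interval $I$; moreover, once $d_g$ between two late terms is small, any near-minimizing joining curve cannot leave a slightly larger slab $\pi^{-1}(I')$ (escaping would cost $t$-length, hence $g$-length, bounded below). The slice estimate then gives $d_h(x_k,x_l)\le\lambda'^{-1/2}\,d_g\bigl((t_k,x_k),(t_l,x_l)\bigr)$, so $(x_k)$ is $h$-Cauchy and, by completeness of $(N^{2n},h)$, converges to some $x_\infty$; thus $(t_k,x_k)\to(t_\infty,x_\infty)$ and $(M,g)$ is complete. I expect the genuine obstacle to be exactly this completeness step: because the slice metric degenerates (shrinking like $e^{2t}$ along the $+1$-eigendistribution of $P$ as $t\to-\infty$ and like $e^{-2t}$ along the $-1$-eigendistribution as $t\to+\infty$), one must rule out a Cauchy sequence escaping to spatial infinity in $N$, and the delicate point is to confine the competing curves to a compact $t$-range before comparing $g$ with $h$ on the slices.
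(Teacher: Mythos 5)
Your proof is correct. For the first assertion you follow essentially the paper's own route: the paper likewise works from the Koszul formula \eqref{koszul} and the hypothesis $\nabla' P=0$, the only cosmetic difference being that it verifies conditions \eqref{sasaki} and \eqref{sasaki0} (condition a) of Theorem~\ref{thm:ssss}) directly, whereas you compute the full connection and verify the equivalent condition \eqref{defsl} (condition b)); your intermediate formulas for $\nabla$ agree with the paper's \eqref{ein1}, and your eigenvalue observation (that $A_t=\cosh(2t)\,\mathrm{id}+\sinh(2t)P$ is $h$-symmetric with eigenvalues $e^{\pm 2t}>0$) in fact subsumes the positive-definiteness check that the paper carries out separately by a block-matrix argument. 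Where you genuinely diverge is the completeness statement. The paper observes that the slice metrics $g|_{N}(t)=\cosh(2t)\,h+\sinh(2t)\,\widetilde h$ all share the Levi-Civita connection of $h$ (hence are complete when $h$ is) and then delegates the conclusion to an external result, Lemma~2 of \cite{CHM}. You instead give a self-contained metric-space argument: the $t$-projection is $1$-Lipschitz, so the $t$-coordinates of a $g$-Cauchy sequence converge; near-minimizing curves between late terms are confined to a compact slab $\pi^{-1}(I')$ because escaping costs $t$-length; on such a slab $g|_H\geq\lambda' h$ with $\lambda'=\min_{t\in I'}e^{-2|t|}>0$, so the $N$-components form an $h$-Cauchy sequence, which converges by completeness of $(N,h)$; Hopf--Rinow then finishes. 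Your route buys independence from \cite{CHM} (in effect reproving the special case of that lemma needed here) at the cost of a longer argument, and it makes explicit the confinement-plus-comparison mechanism that, as you rightly anticipate, is the genuinely delicate point given the $e^{\pm 2t}$ degeneration of the slice metrics; the paper's route is shorter but leaves that mechanism hidden inside the cited lemma.
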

\begin{proof}[Proof]
To show that the metric $g$ is Riemannian we consider an orthonormal basis for $h$ of the form $\{e_1,Pe_1,\dots,e_n,Pe_n\}$. Then the matrix of $g$ with respect to the basis $\{\xi=\partial_t,e_1,Pe_1,\dots,e_n,Pe_n\}$ has the form
\[
\left(
\begin{array}{c|cccc}
1&o&o&\cdots &o\\\hline\\[-9pt]
o^\top&A&O&\cdots &O\\\\[-9pt]
o^\top&O&A&\cdots &O\\
\vdots&\vdots&\vdots&\ddots&\vdots\\\\[-9pt]
o^\top&O&O&\cdots &A
\end{array}
\right),
\]
where  we have denoted
\[
A=\left(
\begin{array}{cc}
\cosh(2t)&\sinh(2t)\\
\sinh(2t)&\cosh(2t)
\end{array}
\right),
\quad
O=\left(
\begin{array}{cc}
0&0\\
0&0
\end{array}
\right),
\quad
o=
\left(
\begin{array}{cc}
0&0
\end{array}
\right),
\quad
o^\top=
\left(
\begin{array}{c}
0\\
0
\end{array}
\right).
\]
The matrix of $g$ is clearly positive definite due to the identity $\cosh^2(2t)-\sinh^2(2t)=1$ implying that all its principal minors are positive.

It is easy to check using \eqref{koszul}, \eqref{strsas} and the
fact that the paracomplex structure $P$ is parallel with
respect to
the Levi-Civita connection of $h$ that the structure defined in
\eqref{strsas} satisfies \eqref{sasaki} and \eqref{sasaki0} and
thus $\M$  is a {para-Sasaki-like Riemannian manifold}.

To show that the metric $g$ on $M^{2n+1}=\mathbb R\times N^{2n}$ is complete we  observe  the metric $\D t^2$ on $\mathbb R$ is complete and  if the Riemannian metric $h$ on $N^{2n}$ is complete 
then the Riemannian metrics on $N^{2n}$ from the one-parameter family 
\[
g|_{_N}(t)=\cosh(2t)\,h+\sinh(2t)\,\widetilde{h}
\] 
 are complete since their Levi-Civita connections coincide with the Levi-Civita connection of $h$ (cf. \eqref{ein1} below) and then apply \cite[Lemma~2]{CHM}.
\end{proof}
We call the para-Sasaki-like  Riemannian manifold con\-struct\-ed in
Theorem~\ref{ext} by a phpcR manifold
\emph{{a hyper\-bol\-ic extension} of a paraholomorphic
paracomplex Riemannian
manifold}.


%
%
%

\subsection{Example 2: Lie group of dimension 5 as a hyperbolic extension of a phpcR manifold}

Let us consider the Lie group $G^5$ of
dimension $5$
 with a basis of left-invariant vector fields $\{e_0,\dots, e_{4}\}$
 defined by the commutators
\begin{equation}\label{comEx2}
\begin{array}{ll}
[e_0,e_1] = \lm e_2 - e_3 + \mu e_4,\quad &
[e_0,e_2] = - \lm e_1 - \mu e_3 - e_4,\\[4pt]
[e_0,e_3] = - e_1  + \mu e_2 + \lm e_4,\quad &
[e_0,e_4] = -\mu e_1 - e_2 - \lm e_3,
\qquad \lm,\, \mu\in\R.
\end{array}
\end{equation}
We equip $G^5$ with an invariant apcpcR 
structure as in \eqref{strEx1} for $n=2$. Then, using \eqref{koszul}, we calculate
that the non-zero components of the Levi-Civita connection are
\[
\begin{array}{c}
\begin{array}{llll}
\n_{e_0} e_1 = \lm e_2+\mu e_4,\quad & \n_{e_1}e_0 = e_3,\quad &
\n_{e_0} e_2 = -\lm e_1-\mu e_3, \quad & \n_{e_2} e_0 = e_4,\\[4pt]
\n_{e_0} e_3 = \mu e_2 + \lm e_4,\quad & \n_{e_3} e_0 = e_1, \quad &
\n_{e_0} e_4 = -\mu e_1 - \lm e_3, \quad & \n_{e_4}e_0 = e_2,
\end{array}
\\[4pt]
\begin{array}{c}\\[-8pt]
\n_{e_1}e_3 = \n_{e_2} e_4 = \n_{e_3} e_1 = \n_{e_4}e_2 = - e_0.
\end{array}
\end{array}
\]
Similarly as in Example~1 we verify that the constructed manifold
$(G^5,\ff,\xi,\eta,g)$ is a para-Sasaki-like 
Rie\-mannian manifold.

We consider the case for $\mu=0$ and $\lambda\not=0$.
By virtue of \eqref{comEx2}, the structure equations
of the group become
\begin{equation}\label{comstr2}
\begin{array}{ll}
\D e^0=\D\eta=0,\; & \\[4pt]
\D e^1=\lm\, e^{0}\wedge  e^2 + e^{0}\wedge e^3, & \qquad
\D e^2= -\lm\, e^{0}\wedge e^1 + e^{0}\wedge e^4,
\\[4pt]
\D e^3= e^{0}\wedge e^{1} + \lm\, e^{0}\wedge e^4, &\qquad
\D e^4=e^{0}\wedge e^2 - \lm\, e^{0}\wedge e^3.
\end{array}
\end{equation}
%
%
%
A basis of 1-forms satisfying \eqref{comstr2} is given by
$e^0=\D t$ and
\[
\begin{split}
e^1=\ &f_1\ \D x^1+f_2\ \D x^2 +f_3\ \D x^3+f_4\ \D x^4, \qquad
e^2=-f_3\ \D x^1-f_4\ \D x^2 +f_1\ \D x^3+f_2\ \D x^4,\\
e^3=\ &f_1\ \D x^1-f_2\ \D x^2 +f_3\ \D x^3-f_4\ \D x^4,\qquad 
e^4=-f_3\ \D x^1+f_4\ \D x^2 +f_1\ \D x^3-f_2\ \D x^4,
\end{split}
\]
where
\[
\begin{array}{ll}
f_1=\exp(t)\cos(\lm t),\quad  f_2=\exp(-t)\cos(\lm t), \quad
f_3=\exp(t)\sin(\lm t), \quad  f_4=\exp(-t)\sin(\lm t).
\end{array}
\]
Then the para-Sasaki-like Riemannian  metric is of the form
\begin{equation*}\label{m02}
g=\D t^2
+2\exp(2t)\left(\D x^1\right)^2
+2\exp(-2t)\left(\D x^2\right)^2
+2\exp(2t)\left(\D x^3\right)^2
+2\exp(-2t)\left(\D x^4\right)^2,
\end{equation*}
which can be written as follows
\begin{equation}\label{m2}
\begin{split}
g=
\D t^2%
&+2\cosh(2t)\left\{\left(\D x^1\right)^2+\left(\D x^2\right)^2+\left(\D x^3\right)^2+\left(\D x^4\right)^2\right\}\\
&+2\sinh(2t)\left\{\left(\D x^1\right)^2-\left(\D x^2\right)^2+\left(\D x^3\right)^2-\left(\D x^4\right)^2\right\}.
\end{split}
\end{equation}
It is clear from  \eqref{comstr2} that the distribution
$H=
\Span\{e_1,\dots,e_4\}$ is integrable and the corresponding
integral submanifold can be considered as the phpcR flat space $\R^{4}=\Span\{\D x^1,\allowbreak{}\dots,\allowbreak{}\D x^{4}\}$ with
the phpcR structure  given by
\begin{equation*}
\begin{split}
&
P\D x^1=\D x^1, \quad P\D x^2=-\D x^2,\quad P\D x^3=\D x^3, \quad P\D x^4=-\D x^4;\\
&
h=\left(\D x^1\right)^2+\left(\D x^2\right)^2+\left(\D x^3\right)^2+\left(\D x^4\right)^2.
\end{split}
\end{equation*}
Therefore, the associated metric $\widetilde{h}(X,Y)=h(X,PY)$ is
$
\widetilde{h}=\left(\D x^1\right)^2-\left(\D x^2\right)^2+\left(\D x^3\right)^2-\left(\D x^4\right)^2.$
Then,
the para-Sasaki-like Riemannian  metric \eqref{m2} takes the form as in \eqref{strsas}.

\section{Curvature properties of para-Sasaki-like Riemannian manifolds. Einstein condition}

Here we consider an apcpcR mani\-fold $\M$ of dimension $2n+1$. Its
curva\-ture tensor of type $(1,3)$ is defined as usual by
$R=[\nabla,\nabla]-\nabla_{[\ ,\ ]}$.  The corresponding curvature
tensor of type $(0,4)$ is denoted by the same letter and it is determined by
$R(x,y,z,w)=g(R(x,y)z,w)$. The Ricci tensor $Ric$,  the scalar
curvature $Scal$ and the *-scalar curvature $Scal^*$ are the usual
traces of the curvature
\[
Ric(x,y)=\sum_{i=0}^{2n}R(e_i,x,y,e_i),\quad
Scal=\sum_{i=0}^{2n}Ric(e_i,e_i),\quad
Scal^*=\sum_{i=0}^{2n} Ric(e_i,\ff e_i)
\]
with respect to an arbitrary orthonormal basis $\{e_0,\dots, e_{2n}\}$ of its tangent space.

\begin{prop}\label{vercurv}
On a para-Sasaki-like Riemannian  manifold $\M$
the following for\-mu\-la holds
\begin{equation}\label{curf}
\begin{array}{l}
R(x,y,\ff z,w)-R(x,y,z,\ff w)
=-\left\{g(y,z)-2\eta(y)\eta(z)\right\}g(x,\ff w)
-\left\{g(y,w)-2\eta(y)\eta(w)\right\}g(x,\ff z)\\[4pt]
\phantom{R(x,y,\ff z,w)-R(x,y,z,\ff w)=}
+\left\{g(x,z)-2\eta(x)\eta(z)\right\}g(y,\ff w)+\left\{g(x,w)-2\eta(x)\eta(w)\right\}g(y,\ff z).
\end{array}
\end{equation}
In particular, we have
\begin{eqnarray}
 &R(x,y)\xi=-\eta(y)x+\eta(x)y,  \label{cur} \\[4pt]
 &[X,\xi]\in H, \quad \nabla_{\xi}X=\ff X-[X,\xi] \in H, \label{inH}  \\[4pt]
&R(\xi,X)\xi=X, \quad Ric(y,\xi)=-2n\,\eta(y),\quad
Ric(\xi,\xi)=-2n.\label{ricxi}
\end{eqnarray}
\end{prop}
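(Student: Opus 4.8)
The plan is to take the explicit formula \eqref{defsl} for $\nabla\ff$ as the only input and derive everything from it by covariant differentiation together with the Ricci identity. As a preliminary I would record the consequence $\nabla_x\xi = \ff x$: equation \eqref{sasaki1} gives $(\nabla_x\eta)(y) = g(x,\ff y)$, and since $\ff$ is $g$-symmetric (the associated $\wg$ is a metric, hence symmetric), this reads $g(\nabla_x\xi, y) = g(\ff x, y)$.

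The ``vertical'' identities then follow quickly. For \eqref{cur} I compute $R(x,y)\xi = \nabla_x\nabla_y\xi - \nabla_y\nabla_x\xi - \nabla_{[x,y]}\xi$; inserting $\nabla\xi = \ff$ and using that $\nabla$ is torsion-free collapses the connection terms and leaves $R(x,y)\xi = (\nabla_x\ff)y - (\nabla_y\ff)x$, into which \eqref{defsl} is substituted. The symmetric parts $-g(x,y)\xi$ and $2\eta(x)\eta(y)\xi$ cancel under antisymmetrization, giving $R(x,y)\xi = -\eta(y)x + \eta(x)y$. Specializing $x=\xi$ yields $R(\xi,X)\xi = X$, and tracing $g(R(e_i,y)\xi, e_i)$ over the $(2n+1)$-dimensional orthonormal basis gives $Ric(y,\xi) = -(2n+1)\eta(y) + \eta(y) = -2n\,\eta(y)$, whence $Ric(\xi,\xi) = -2n$. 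The facts \eqref{inH} are separate and elementary: $\eta([X,\xi]) = 0$ follows from $\D\eta = 0$ with $\eta(\xi)=1$ and $\eta(X)=0$, while $\nabla_\xi X = \nabla_X\xi + [\xi,X] = \ff X - [X,\xi]$ lies in $H$ because both $\ff X$ and $[X,\xi]$ do.

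The substantive step is \eqref{curf}. Using the $g$-symmetry of $\ff$ I would first rewrite the left-hand side as $g(R(x,y)\ff z - \ff R(x,y)z, w)$, i.e. as the commutator $[R(x,y),\ff]$ applied to $z$. By the second Ricci identity this commutator is the antisymmetrized second covariant derivative of $\ff$; equivalently, in terms of the $(0,3)$-tensor $F$ of \eqref{F=nfi} the left-hand side equals $(\nabla_x F)(y,z,w) - (\nabla_y F)(x,z,w)$. Since \eqref{defsl} gives $F(y,z,w) = -g(y,z)\eta(w) - g(y,w)\eta(z) + 2\eta(y)\eta(z)\eta(w)$, and since $\nabla g = 0$ with $(\nabla_x\eta)(\cdot) = g(x,\ff\,\cdot)$, every covariant derivative of $F$ lands on one of the three $\eta$ factors and produces a $g(x,\ff\,\cdot)$, so $\nabla F$ is read off mechanically. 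Antisymmetrizing in $x,y$, the lone term carrying $g(x,\ff y)\eta(z)\eta(w)$ is symmetric in $x,y$ (again by symmetry of $\ff$) and cancels, and the eight surviving terms regroup into the right-hand side of \eqref{curf}.

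The only real difficulty is bookkeeping in this final antisymmetrization: pairing the eight terms correctly and checking that the $\eta\otimes\eta$ contributions assemble into the combinations $\{g(\cdot,\cdot) - 2\eta(\cdot)\eta(\cdot)\}$ appearing in \eqref{curf}. There is no analytic obstacle; the symmetry $g(\ff x,y) = g(x,\ff y)$ is exactly what kills the unwanted terms, and once it is used \eqref{curf} follows by direct comparison.
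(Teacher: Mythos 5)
Your proof is correct and takes essentially the same route as the paper: the Ricci identity for $\ff$, combined with \eqref{defsl} and the consequence $\nabla\xi=\ff$ of \eqref{sasaki1}, gives \eqref{curf} by the direct computation you outline, and \eqref{inH}, \eqref{ricxi} follow just as in the paper. The only cosmetic difference is that you derive \eqref{cur} directly from $R(x,y)\xi=(\nabla_x\ff)y-(\nabla_y\ff)x$ rather than by setting $z=\xi$ in \eqref{curf}; both are immediate.
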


\begin{proof}[Proof]
Applying \eqref{defsl} to the Ricci identity for $\ff$, \ie
\[
R(x,y,\ff z,w)-R(x,y,z,\ff
w)=g\Bigl(\left(\n_x\n_y\ff\right)z,w\Bigr)-g\Bigl(\left(\n_y\n_x\ff\right)z,w\Bigr),
\]
and using
\eqref{sasaki1}, we obtain \eqref{curf} by  straightforward
calculations. Equality \eqref{curf} for $z=\xi$
implies \eqref{cur} due to \eqref{str}. The assertions in \eqref{inH} follow from
\eqref{sasaki1} and  $\D \eta=0$. Equalities
\eqref{ricxi} are direct consequences of
\eqref{cur}.
\end{proof}

\subsection{The horizontal curvature and the Einstein condition}

From $\D\eta=0$ it follows locally $\eta=\D t$, where $t$ is the coordinate of $\R$.
Then, $H=\ker\eta$ is integrable
and we get locally the product
$M^{2n+1}=\mathbb R \times N^{2n}$ with $TN^{2n}=H$.
As a result,	the submanifold
$(N^{2n},P=\ff|_H,h=g|_H)$ is a phpcR manifold.
In fact, by \eqref{sasaki} we get that
$h\left((\nabla^h_X P)Y,Z\right)=F(X,Y,Z)=0$, where
$\nabla^h$ is the Levi-Civita connection of $h$.

The submanifold $N^{2n}$ can be considered as a hypersurface of $M^{2n+1}$ with unit normal
$\xi=\frac{\D}{\D t}$. The equality \eqref{sasaki1}
yields
\[
g(\n_X\xi,Y)=-g(\n_XY,\xi)=g(X,\ff Y)=\widetilde{g}|_{H}(X,Y),
\qquad \n_{\xi}\xi=0.
\]
Therefore, the second fundamental form is equal to
$-\widetilde{g}|_{H}=-\widetilde h$.
Then, the Gauss equation (see e.g.
\cite[Chapter VII, Proposition~4.1]{KN}) has the form
\begin{equation}\label{gaus}
\begin{array}{l}
R(X,Y,Z,W)=R^h(X,Y,Z,W)+g(X,\ff Z)g(Y,\ff W)
-g(Y,\ff Z)g(X,\ff W),
\end{array}
\end{equation}
where $R^h$ is the curvature tensor of the phpcR manifold $(N^{2n},P,h)$.

For the horizontal Ricci tensor we obtain from  \eqref{ricxi} and \eqref{gaus} that
\begin{equation}\label{ric}
\begin{split}
Ric(Y,Z)&=\sum_{i=1}^{2n}R(e_i,Y,Z,e_i)+R(\xi,Y,Z,\xi)\\[4pt]
\phantom{Ric(Y,Z)}
&=Ric^h(Y,Z)+g(\ff Y,\ff Z)-g(Y,Z)=Ric^h(Y,Z),
\end{split}
\end{equation}
where $Ric^h$ is the Ricci tensor of $h=g|_{H}$.

Bearing in mind \propref{vercurv}, we find that the curvature tensor in
the direction of $\xi$ on a para-Sasaki-like Riemannian  manifold is completely determined by
$\eta,\ff,g,\widetilde g$.
Indeed, we obtain the following equality due to \eqref{cur}
and the properties of the Riemannian curvature
\begin{equation}\label{Rxi}
R(x,y,z,\xi)=R(\xi,z,y,x)=-\eta(x)g(y,z)+\eta(y)g(x,z).
\end{equation}
The formulas in  \eqref{gaus} and \eqref{Rxi} imply that the Riemannian curvature of
a para-Sasaki-like Riemannian  manifold is
completely determined by the curvature of the underlying
phpcR manifold $(N^{2n}, TN^{2n}=H,P,h)$ as follows 
\[
\begin{split}
R(x,y,z,w)&=R^h(x|_H, y|_H, z|_H, w|_H)
- g(y,\ff z)g(x,\ff w)+g(x,\ff z)g(y,\ff w)\\[4pt]
&\phantom{=}
-\{g(y,z)\eta(x)-g(x,z)\eta(y)\}\eta(w)
-\{g(x,w)\eta(y)-g(y,w)\eta(x)\}\eta(z).
\end{split}
\]
Then, for the Ricci tensor and the scalar curvatures we have
\begin{equation}\label{ricPinv}
\begin{split}
Ric(y,z)=Ric^h( y, z)-2n\,\eta(y)\eta(z),\qquad
Scal=Scal^h-2n,\qquad Scal^*=Scal^{h*}.
\end{split}
\end{equation}
We get from \eqref{ricPinv}, or comparing \eqref{ric} with \eqref{ricxi}, the following
\begin{prop}\label{einstein}
A   para-Sasaki-like Riemannian manifold $\M$ is Einstein if and only if the un\-der\-lying local phpcR manifold  $(N^{2n},P,h)$ is Einstein with negative scalar curvature $-4n^2$, \ie
\begin{equation}\label{ein0}
Ric^h =-2n\, h.
\end{equation}
\end{prop}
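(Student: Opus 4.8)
The plan is to read the result off directly from the Ricci comparison formula \eqref{ricPinv} that has already been established; the only genuine content is the bookkeeping of the Einstein constant and the verification that it is forced to a single value. Recall that $\M$ being Einstein means $Ric=\lm\, g$ for a constant $\lm$, and that by \eqref{ricxi} a para-Sasaki-like Riemannian manifold always satisfies $Ric(\xi,\xi)=-2n$.

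First I would treat the forward implication. Assuming $Ric=\lm\, g$, I evaluate at $(\xi,\xi)$: using $g(\xi,\xi)=1$ together with \eqref{ricxi} gives $\lm=-2n$ immediately, so $-2n$ is the only admissible Einstein constant. Restricting $Ric=-2n\,g$ to horizontal arguments $Y,Z\in H$, where $g|_H=h$, and invoking \eqref{ric} (namely $Ric(Y,Z)=Ric^h(Y,Z)$) yields $Ric^h(Y,Z)=-2n\,h(Y,Z)$, which is precisely \eqref{ein0}. Tracing over an orthonormal basis of $H$ then gives $Scal^h=-4n^2$, confirming the stated negative value of the scalar curvature.

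For the converse I would start from \eqref{ein0} and propagate it off the horizontal distribution using the metric splitting $g(y,z)=g(y|_H,z|_H)+\eta(y)\eta(z)$, which follows from \eqref{str}. Interpreting $Ric^h$ in \eqref{ricPinv} as acting on horizontal parts, \eqref{ein0} gives $Ric^h(y,z)=-2n\,h(y|_H,z|_H)=-2n\bigl(g(y,z)-\eta(y)\eta(z)\bigr)$, so that \eqref{ricPinv} collapses to
\[
Ric(y,z)=-2n\bigl(g(y,z)-\eta(y)\eta(z)\bigr)-2n\,\eta(y)\eta(z)=-2n\,g(y,z),
\]
whence $\M$ is Einstein with Einstein constant $-2n$.

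The computation carries no real obstacle; the single point that must not be glossed over is the role of the vertical direction. The substance of the proposition is that the extra term $-2n\,\eta\otimes\eta$ in \eqref{ricPinv} is exactly what upgrades the horizontal Einstein condition $Ric^h=-2n\,h$ to the genuine Einstein condition $Ric=-2n\,g$ on all of $M$, and conversely that the rigid value $Ric(\xi,\xi)=-2n$ fixes the Einstein constant so that no other scalar curvature is compatible. I would therefore organise the write-up around matching these two constants rather than around any curvature manipulation, since the required curvature identities are already recorded in \propref{vercurv} and \eqref{ricPinv}.
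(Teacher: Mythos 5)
Your proposal is correct and follows essentially the same route as the paper, which derives the proposition directly from the Ricci comparison formula \eqref{ricPinv} (equivalently, comparing \eqref{ric} with \eqref{ricxi}); your write-up simply makes explicit the two steps the paper leaves implicit, namely that $Ric(\xi,\xi)=-2n$ pins the Einstein constant to $-2n$ and that the term $-2n\,\eta\otimes\eta$ in \eqref{ricPinv} converts the horizontal condition $Ric^h=-2n\,h$ into $Ric=-2n\,g$ on all of $M$.
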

Proposition~\ref{einstein} allows a construction of a new Einstein manifold (see Example~3 below). We have
\begin{thm}\label{extein}
Let  $(N^{2n},P,h^N)$ be a $2n$-dimensional  Einstein  phpcR manifold with negative scalar curvature $-4n^2$, \ie its Ricci tensor satisfies \eqref{ein0}. Then its hyperbolic exten\-sion, the $(2n+1)$-dimensional space $(M^{2n+1}=\mathbb R\times N^{2n},g,\phi,\eta)$
with the apcpcR 
structure $(g,\phi,\eta)$ on
$M^{2n+1}$ defined by
\begin{equation*}\label{strsas1}
\eta=\D t, \quad \ff |_H
=P, \quad \eta\circ\ff =0,\quad g=\D t^2+\cosh(2t)\,h^N+\sinh(2t)\,\widetilde{h}^N
\end{equation*}
is an Einstein  para-Sasaki-like Riemannian manifold with negative scalar curvature.

If the Einstein  Riemannian manifold $(N^{2n},h)$ is complete then the para-Sasaki-like Riemannian manifold  $(M^{2n+1},g)=(\mathbb R\times N^{2n},g)$ is a complete Einstein Riemannian manifold with negative scalar curvature.
\end{thm}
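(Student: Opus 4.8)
The plan is to deduce the theorem from the two results already established, rather than to compute the curvature of $M^{2n+1}$ afresh. First I would apply \thmref{ext}: an Einstein phpcR manifold is in particular a phpcR manifold, so \thmref{ext} gives at once that the hyperbolic extension $(M^{2n+1}=\mathbb R\times N^{2n},g,\ff,\xi,\eta)$ defined by \eqref{strsas} is a para-Sasaki-like Riemannian manifold, and the completeness clause of that theorem guarantees that $g$ is complete as soon as $(N^{2n},h)$ is complete. This disposes of the para-Sasaki-like property and of the completeness of the total space, so that only the Einstein condition and the negativity of the scalar curvature remain.

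For the Einstein property I would invoke \propref{einstein}, which characterises the Einstein para-Sasaki-like manifolds by the single condition $Ric^h=-2n\,h$ on the underlying local phpcR factor. The hypothesis \eqref{ein0} is precisely this condition, so \propref{einstein} yields directly that the extension is Einstein; the value of the Einstein constant is fixed by the $\xi$-direction, since $Ric(\xi,\xi)=-2n$ by \eqref{ricxi}, whence $Ric=-2n\,g$. The scalar curvature is then read off from the transformation rule \eqref{ricPinv}, $Scal=Scal^h-2n$; substituting $Scal^h=\tr Ric^h=-4n^2$ gives $Scal=-4n^2-2n=-2n(2n+1)<0$, which is the asserted negativity.

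The step I expect to be the main obstacle is the Einstein verification itself, namely transporting the base normalisation \eqref{ein0} to the total space in the correct pointwise sense. The horizontal slices of $M^{2n+1}$ carry not the fixed metric $h$ but the $t$-dependent family $g|_H=\cosh(2t)\,h+\sinh(2t)\,\widetilde{h}$, so one must make sure that the horizontal Ricci tensor of $M^{2n+1}$ is $-2n$ times this induced slice metric for every $t$, and not merely $-2n\,h$. The input I would rely on is the one already used in the proof of \thmref{ext}: since $P$ is parallel for the Levi-Civita connection of $h$, the associated metric $\widetilde{h}$ is parallel as well, so every member of the family $\cosh(2t)\,h+\sinh(2t)\,\widetilde{h}$ shares that Levi-Civita connection and hence has the same Ricci $(0,2)$-tensor. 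I would therefore concentrate the work on reconciling this $t$-independent horizontal Ricci tensor with the $t$-dependent induced metric, that is, on the interplay between the Einstein normalisation of the base and the hyperbolic rescaling of the fibre. Once the Einstein property is secured, the final assertion needs nothing new: the completeness clause of \thmref{ext} upgrades it to a complete Einstein Riemannian manifold, and the computation above shows its scalar curvature is negative.
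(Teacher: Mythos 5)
Your reduction follows the same skeleton as the paper's own proof: Theorem~\ref{ext} settles the para-Sasaki-like property and completeness, and Proposition~\ref{einstein} is invoked as the criterion for the Einstein condition. But your proposal stops at what you yourself call ``the main obstacle'', and that obstacle is a genuine gap which your outline never closes: one must show that the Ricci tensor of the $t$-dependent slice metric $h_t=g|_H=\cosh(2t)\,h^N+\sinh(2t)\,\widetilde h^N$ equals $-2n\,h_t$ for \emph{every} $t$, and this is not done. Worse, the two (correct) facts you propose to use are irreconcilable with that goal. Since every metric of the family $h_t$ has the same Levi-Civita connection $\nabla^{h^N}$ (because $\nabla^{h^N}P=0$), and since the Ricci tensor is the trace of the $(1,3)$-curvature tensor and therefore depends only on the connection, one gets
\begin{equation*}
Ric^{h_t}=Ric^{h^N}=-2n\,h^N\neq-2n\,h_t\qquad\text{for }t\neq0 .
\end{equation*}
So, carried out honestly, your plan shows that the criterion of Proposition~\ref{einstein} \emph{fails} off the slice $t=0$; it disproves rather than proves the Einstein property.

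The paper's proof crosses this same point via the identity \eqref{ein3}, obtained by ``taking the trace'' in \eqref{ein2}; but there the trace of $R^h$ is taken with respect to $h^N$, whereas the Ricci tensor of $h$ is by definition the $h$-trace, and the $h$-trace of \eqref{ein2} gives back precisely your invariance statement $Ric^h=Ric^{h^N}$. Hence \eqref{ein3}, and with it \eqref{ein4}, is incorrect, and the gap you flagged cannot be filled. A concrete check: take $N^4$ to be the product of two hyperbolic planes of constant curvature $-4$, so that \eqref{ein0} holds with $n=2$; then $\widetilde h^N=g_1\oplus(-g_2)$ and the hyperbolic extension is the doubly warped product $g=\D t^2+e^{2t}g_1+e^{-2t}g_2$, whose Ricci tensor (by the standard multiply-warped-product formulas, or equivalently by \eqref{ric} and \eqref{ricxi}) satisfies $Ric(\xi,\xi)=-4$ while $Ric(X,Y)=Ric^{g_1}(X,Y)=-4\,g_1(X,Y)=-4e^{-2t}g(X,Y)$ for $X,Y$ tangent to the first factor; this metric is not Einstein. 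In short, your proposal is incomplete at exactly the step where the paper's argument is erroneous, and your own key observation is the one that exposes the error.
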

\begin{proof}[Proof.]
According to Theorem~\ref{ext}, it remains to show that the Einstein condition on the Riemannian manifold $(M^{2n+1},g)$ holds.

The horizontal metrics, \ie the Riemannian metric $h$ and the pseudo-Riemannian metric $\widetilde{h}$ of signature $(n,n)$ on $N^{2n}$ are
\begin{equation}\label{einm}
\begin{split}
h=g{|_H}=\cosh(2t)h^N+\sinh(2t)\widetilde{h}^N,\qquad
\widetilde{h}=\widetilde{g}{|_H}=\sinh(2t)h^N+\cosh(2t)\widetilde{h}^N.
\end{split}
\end{equation}
The Levi-Civita connection $\nabla^{h^N}$ of the metric
$h^N$ coincides with the Levi-Civita connection of
$\widetilde{h}^N$ since $\nabla^{h^N} P=0$. Using this
fact,  the Koszul formula gives for $X,Y,Z\in TN^{2n}$  the following
\begin{equation}\label{ein1}
\begin{array}{l}
2g\left(\nabla^g_XY,Z\right)
=\cosh(2t)\,h^N\left(\nabla^{h^N}_XY,Z\right)+\sinh(2t)\,\widetilde{h}^N\left(\nabla^{h^N}_XY,Z\right)
=2h\left(\nabla^{h^N}_XY,Z\right),\\[4pt]
2g\left(\nabla^g_X\xi,Y\right)=\xi\, g(X,Y)=2\sinh(2t)\,h^N(X,Y)+2\cosh(2t)\,\widetilde{h}^N(X,Y)=2\widetilde{g}(X,Y).
\end{array}
\end{equation}
The first equality in \eqref{ein1} shows that the Levi-Civita connection $\nabla^h$ of the horizontal metric $h$ coincides with the Levi-Civita connection $\nabla^{h^N}$, $\nabla^h=\nabla^{h^N}$. 
Now, \eqref{einm} yields the following formula for the curvature of $h$
\begin{equation}\label{ein2}
R^h=\cosh(2t)\,R^{h^N}+\sinh(2t)\,\widetilde{R}^{h^N}, \qquad \widetilde{R}:=PR.
\end{equation}
For the Ricci tensor we get the following taking the trace in \eqref{ein2}
\begin{equation}\label{ein3}
Ric^h(X,Y)=\cosh(2t)\,Ric^{h^N}(X,Y)+\sinh(2t)\,Ric^{h^N}(X,PY).
\end{equation}
Now, \eqref{ein0}, \eqref{einm} and \eqref{ein3} imply
\begin{equation}\label{ein4}
Ric^h(X,Y)=-2n\left\{\cosh(2t)h^N(X,Y)+\sinh(2t)\widetilde{h}^N(X,Y)\right\}=-2n\,h(X,Y).
\end{equation}
The second equality in \eqref{ein1} tells us that the manifold $N^{2n}$ can be considered as a hypersurface of $M^{2n+1}$ with second fundamental form egual to $-\widetilde{g}$, which combined with \eqref{ein4} and Proposition~\ref{einstein} yields that the para-Sasaki-like Riemannian manifold $(M^{2n+1},\phi,\xi=\frac{\D}{\D t},\eta=\D t,g)$ is an Einstein Riemannian manifold with negative scalar curvature $-2n(2n+1)$.
\end{proof}

\subsection{Example~3: Complete para-Sasaki-like Einstein space as a hyperbolic extension}
Consider the product of two complete 
$n$-dimensional  Einstein Riemannian manifolds with a negative scalar curvature equal to $-2n^2$. For example, taking the product of two discs with the Poincare metric, $N^{2n}=D^n\times D^n$, $h^N=g_D\times g_D$ and the usual product structure $P$, defined by $PA=A$, $PB=-B$ for $(A,B) \in TD^n\times TD^n$, one gets a complete Einstein phpcR manifold $(N^{2n},P,h^N)$, whose Ricci tensor satisfies \eqref{ein0}.
The product manifold $M^{2n+1}=\mathbb R\times N^{2n}$ with the metric
$g=\D t^2+\cosh(2t)\,h^N+\sinh(2t)\,\widetilde{h}^N$ is a complete Einstein para-Sasaki-like  Riemannian manifold according to Theorem~\ref{extein}.


\subsection{Example~4: Hyperbolic extension of a $P$-invariant sphere in a flat space}

The present exam\-ple
illustrates Theorem~\ref{ext}.
Let us consider the real space  $\R^{2n+2}=\left\{\left(x^1,\dots,x^{2n+2}\right)\right\}$, $n\geq 2$, as a
flat phpcR manifold. It means that $\R^{2n+2}$ is
equipped with the canonical paracomplex structure $P'$ and the
canonical  $P'$-com\-patible Rie\-man\-ni\-an metrics $h'$ and $\wh'$  defined
for arbitrary vectors $x' = (x^1, \dots, x^{2n+2})$ and $y' = (y^1,
\dots, y^{2n+2})$ 
in $\R^{2n+2}$
as follows
\begin{align}
&P'x'=\left(x^{n+2},\dots,x^{2n+2},x^1,\dots,x^{n+1}\right),
\nonumber\\
&h'(x',y')=\sum_{i=1}^{2n+2} \left(x^i y^i\right), \qquad 
\widetilde h'(x',y')=\sum_{i=1}^{n+1}\left(x^i y^{n+i+1}+x^{n+i+1}y^i\right).\nonumber
\end{align}
Clearly, $P'$, $h'$, $\wh'$ satisfy \eqref{hP}, 
the Levi-Civita connection $\nabla'$ of the Riemannian metric $h'$ preserves the para\-complex structure $P'$, $\nabla' P'=0$ and we have a phpcR manifold.

The so-called \emph{invariant hypersurface} $S_h^{2n}(z'_0; a,b)$ in the phpcR manifold $(\R^{2n+2},h',\allowbreak{}P')$ is studied in  \cite{StaGriMek87,StaGriMek91}. We outline the construction below as follows.

Identifying the point $z' = (z^1, \dots,
z^{2n+2})$ in $\R^{2n+2}$ with its position vector $z'$, we
consider the  $P'$-invariant hypersurface $S_h^{2n}(z'_0;\A a, b)$
defined by the equations
\[
h'\left(z'-z'_0, z'-z'_0\right) = a,\qquad \widetilde h' \left(z'-z'_0, z'-z'_0\right) = b,
\]
where $(0, 0)\allowbreak\neq\allowbreak (a, b) \in\R^2$, $a>|b|$.
The codimension two submanifold $S_h^{2n}(z'_0; a,b)$ is the intersection of the standard $(2n+1)$-dimensional sphere with the standard hyperboloid   in $\R^{2n+2}$ and it is clearly
$P'$-invariant. The restriction of $h'$ on $S_h^{2n}(z'_0;a,b)$ has rank $2n$ due to the condition
$(0, 0)\allowbreak\neq\allowbreak (a, b)$.
The phpcR structure $(P',h')$ on $\R^{2n+2}$ inherits a phpcR  structure
$\bigl(P=P'|_{{S_h^{2n}}}, h=h'|_{S_h^{2n}}\bigr)$ on 
$S_h^{2n}(z'_0;a,b)$ for $n\geq 2$
 which, sometimes, is
called a \emph{$P$-invariant sphere} with center $z'_0$ and pair of
parameters $(a, b)$  \cite{StaGriMek87}.

The curvature tensor of $S_h^{2n}(z'_0;a,b)$ is given by the formula \cite{Sta87} (see also \cite{StaGriMek91})
\begin{equation}\label{Rnu}
R'|_{S_h^{2n}} = \frac{1}{a^2-b^2} \left\{a\left(\pi_1^{h'} + \pi_2^{h'}\right) - b \pi_3^{h'}\right\},
\end{equation}
where $2\pi_1^{h'} = h'|_{S_h^{2n}}\owedge h'|_{S_h^{2n}}$,
$2\pi_2^{h'} = \widetilde{h}'|_{S_h^{2n}}\owedge \widetilde{h}'|_{S_h^{2n}}$,
$\pi_3^{h'} =h'|_{S_h^{2n}}\owedge\widetilde{h}'|_{S_h^{2n}}$ and $\owedge$
stands for the Kulkarni-Nomizu product of two $(0,2)$-tensors; for
example,
\[
\begin{array}{l}
\left(h\owedge\widetilde{h}\right)(X,Y,Z,W)
=h(Y,Z) \widetilde h(X,W) - h'(X,Z)\widetilde h(Y,W)
+\widetilde h(Y,Z) h(X,W) - \widetilde h(X,Z) h(Y,W).
\end{array}
\]
Consequently, we have
\begin{equation}\label{Ric-h-sph}
Ric'|_{S_h^{2n}}=\frac{2(n-1)}{a^2-b^2}\left(a\, h'|_{S_h^{2n}}-b\, \widetilde{h}'|_{S_h^{2n}}\right),
\quad
Scal'|_{S_h^{2n}}=\frac{4n(n-1)a}{a^2-b^2},
\quad
Scal'^*|_{S_h^{2n}}=-\frac{4n(n-1)b}{a^2-b^2}.
\end{equation}

The  product manifold $M^{2n+1}=\mathbb R\times S_h^{2n}(z'_0;
a,b)$  equipped with the  apcpcR structure $(\ff,\xi,\eta,g)$ given in \eqref{strsas}
is a para-Sasaki-like 
Riemannian manifold according to \thmref{ext}.


Following the proof of \thmref{extein}, we get from \eqref{ein2} and \eqref{Rnu} the next formula for the horizontal curvature
\begin{equation}\label{rrr}
\begin{split}
R^h&=\frac{1}{a^2-b^2}\left\{\cosh(2t)\,\left[a\left(\pi_1^{h'}+\pi_2^{h'}\right)-b\,\pi_3^{h'}\right]
+\sinh(2t)\,\left[a\,\pi_3^{h'}-b\left(\pi_1^{h'}+\pi_2^{h'}\right)\right]\right\}\\[4pt]
\phantom{=}
&=\frac{1}{a^2-b^2}\left\{\left[a\cosh(2t)-b\sinh(2t)\right]\left(\pi_1^{h'}+\pi_2^{h'}\right)
-\left[b\cosh(2t)-	a\sinh(2t)\right]\pi_3^{h'}\right\}.
\end{split}
\end{equation}
Taking into account \eqref{gaus}, \eqref{einm} and \eqref{rrr},
we obtain the  expression of
the horizontal curvature $R|_H$ of the para-Sasaki-like Riemannian  manifold $M^{2n+1}=\mathbb R^+\times
S_h^{2n}(z'_0; a,b)$ 	
\[
\begin{split}
R|_H&=R^h+\sinh^2(2t)\,\pi_1^h+\cosh^2(2t)\,\pi_2^h-\sinh(2t)\cosh(2t)\,\pi_3^h\\[4pt]
\phantom{R|_H}
&=\frac1{a^2-b^2}\Bigl\{\bigl[a \cosh(2t)+b \sinh(2t)\bigr]\left(\pi_1^h+\pi_2^h\right)
-\bigl[b \cosh(2t)+a \sinh(2t)\bigr]\pi_3^h\Bigr\}.
\end{split}
\]
Then, \eqref{ric}, \eqref{einm} and \eqref{Ric-h-sph}  imply the following formula
for the horizontal Ricci tensor
\[
\begin{split}
Ric|_H&=Ric^h
=\frac{2(n-1)}{a^2-b^2}\Bigl\{\bigl[a\cosh(2t)+b\sinh(2t)\bigr]h
-\bigl[b\cosh(2t)+a\sinh(2t)\bigr]\widetilde h\Bigr\}.
\end{split}
\]
Thus, the latter equality, \eqref{ricPinv}, \eqref{einm}  and \eqref{rrr} give
\[
\begin{split}
Ric&=\frac{2(n-1)}{a^2-b^2}\Bigl\{\bigl[a\cosh(2t)+b\sinh(2t)\bigr](g-\eta\otimes\eta)
-\bigl[b\cosh(2t)+a\sinh(2t)\bigr]\widetilde g\Bigr\} -2n\, \eta\otimes\eta.
\end{split}
\]
Therefore, the para-Sasaki-like Riemannian manifold $M^{2n+1}=\mathbb R\times
S_h^{2n}(z'_0; a,b)$ 	is almost Ein\-stein-like since
its Ricci tensor has is expressed by the following way
$Ric=\al(t) g + \bt(t) \wg +\gm(t) \eta\otimes\eta$, where $\al(t)$, $\bt(t)$ and $\gm(t)$ are the smooth functions
determined in the above equality.

\section{Paracontact conformal  transformations}
Let $\M$ be an apcpcR manifold.
The transformation 
\begin{equation}\label{cct}
\begin{array}{l}
\overline{\eta}=\exp(w)\eta,\quad \overline{\xi}=\exp(-w)\xi,\\[4pt]
\overline{g}(x,y) = \exp(2u) \cosh(2v)g(x,y)+\exp(2u) \sinh(2v)g(x,\ff y)\\[4pt]
\phantom{\overline{g}(x,y) = \exp(2u) \cosh(2v)g(x,y)}
+\bigl\{\exp(2w) -\exp(2u) \cosh(2v)\bigr\}\eta(x)\eta(y),
\end{array}
\end{equation}
where $u$, $v$, $w$ are smooth on $M$
we call a \emph{paracontact conformal transformation} of  $(\ff, \xi, \eta, g)$. It is easy to check that $(M, \ff, \overline{\xi}, \overline{\eta}, \overline{g})$ is again an apcpcR manifold and the paracontact conformal transformations on an apcpcR manifold form a group.
When $u$, $v$, $w$ are constant we have a
\emph{paracontact homothetic transformation}.

In this section we study  the para-Sasaki-like condition under paracontact conformal transformations.

\begin{lem}\label{lemma-F}
Let $\M$ and $(M, \ff, \overline{\xi}, \overline{\eta}, \overline{g})$ be related by a paracontact con\-for\-mal transformation. Then we have

\begin{equation}\label{ff}
\begin{array}{l}
    2\overline{F}(x,y,z)=\exp(2u)\bigl\{\cosh(2v)\left[2F(x,y,z)-F_2(x,y,z)\right]+\sinh(2v) F_1(x,y,z)\\[4pt]
   \phantom{2\overline{F}(x,y,z)=\exp(2u)\bigl\{}
    +2\left[
    \chi_1(z)g(\ff x,\ff y)+\chi_1(y)g(\ff x,\ff z)+\chi_2(z)g(x,\ff y)+\chi_2(y)g(x,\ff z)\right]\bigr\}\\[4pt]
   \phantom{2\overline{F}(x,y,z)=}
    +\exp(2w)\bigl\{F_2(x,y,z)+2\eta(x)\left[\eta(y)\D w(\ff z)+\eta(z)\D w(\ff y)\right]\bigr\},
\end{array}
\end{equation}
where 
\[
\begin{array}{l}
F_1(x,y,z)=F(x,\ff y,z)+F(\ff y,x,z)-F(z,x,\ff y)+F(x,y,\ff z)-F(y,x,\ff z)+F(\ff z,x,y),
\\[4pt]
F_2(x,y,z)=\left[F(x,y,\xi)-F(\ff y,\ff x,\xi)\right]\eta(z)
    +\left[F(x,z,\xi)-F(\ff z,\ff x,\xi)\right]\eta(y)\\[4pt]
		\phantom{F_2(x,y,z)=}
    +\left[F(y,z,\xi)-F(\ff z,\ff y,\xi) \right.
		\left.+F(z,y,\xi)-F(\ff y,\ff z,\xi)\right]\eta(x),
\end{array}
\]	
\[
\begin{array}{l}
\chi_1(z)=\cosh(2v)\left[\D u(\ff z)-\D v(z)\right]+\sinh(2v)\left[\D v(\ff z)-\D u(z)\right], \\[4pt]
\chi_2(z)=\cosh(2v)\left[\D v(\ff z)-\D u(z)\right]+\sinh(2v)\left[\D u(\ff z)-\D v(z)\right].
\end{array}
\]

\end{lem}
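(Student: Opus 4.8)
The plan is to compute $\overline{F}$ directly from its definition $\overline{F}(x,y,z)=\overline{g}\bigl((\overline{\n}_x\ff)y,z\bigr)$, where $\overline{\n}$ is the Levi-Civita connection of $\overline{g}$, and to reduce everything to the original tensor $F$ together with $\D u$, $\D v$, $\D w$. Since $\ff$ is left unchanged by the transformation \eqref{cct}, writing $Q:=\overline{\n}-\n$ (a symmetric $(1,2)$-tensor, being the difference of two torsion-free connections) gives
\[
(\overline{\n}_x\ff)y=(\n_x\ff)y+Q(x,\ff y)-\ff\,Q(x,y),
\]
so that $\overline{F}(x,y,z)=\overline{g}\bigl((\n_x\ff)y,z\bigr)+\overline{g}\bigl(Q(x,\ff y),z\bigr)-\overline{g}\bigl(\ff\,Q(x,y),z\bigr)$. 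Thus the only new ingredient I would need is an explicit form of $Q$.

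To obtain $Q$, I would use the standard formula for the difference of two Levi-Civita connections: since $\n g=0$ and $\overline{\n}\,\overline{g}=0$, one has
\[
2\overline{g}\bigl(Q(x,y),z\bigr)=(\n_x\overline{g})(y,z)+(\n_y\overline{g})(x,z)-(\n_z\overline{g})(x,y).
\]
The right-hand side is computed by differentiating \eqref{cct} with $\n g=0$: the scalar factors $\exp(2u)\cosh(2v)$, $\exp(2u)\sinh(2v)$ and $\exp(2w)-\exp(2u)\cosh(2v)$ produce $\D u$, $\D v$, $\D w$ terms (using $\cosh'=\sinh$, $\sinh'=\cosh$), while the $g(\cdot,\ff\cdot)$ and $\eta\otimes\eta$ pieces produce $\n\ff$ and $\n\eta$ terms, which are re-expressed through $F$ via \eqref{F=nfi} and \eqref{Fxieta}, namely $(\n_x\eta)(y)=-F(x,\ff y,\xi)$. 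This is precisely the mechanism that generates the three groups of terms in \eqref{ff}: the $\cosh(2v)$-weighted group, the $\sinh(2v)$-weighted group (which assembles into $F_1$), and the $\eta$-directed group (which assembles into $F_2$ together with the $\D w$ corrections); the pure function-derivative pieces collect into $\chi_1,\chi_2$ after using the hyperbolic identities.

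Substituting the resulting $Q$ back and expanding $\overline{g}$ by its definition \eqref{cct}, I would reduce every scalar to an evaluation of $F$ at suitably permuted and $\ff$-twisted arguments. The simplification relies on the symmetry and $\ff$-compatibility properties of $F$ in \eqref{F-prop}, on the algebraic relations $\ff^2=\id-\eta\otimes\xi$, $\ff\xi=0$, $\eta\circ\ff=0$, $\eta(\xi)=1$ from \eqref{str}, and on the identities $g(\ff a,b)=g(a,\ff b)$ and $\eta(\ff a)=0$, which are needed to handle the term $\overline{g}\bigl(\ff\,Q(x,y),z\bigr)$ by first moving $\ff$ onto the second argument. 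Collecting the horizontal contributions yields the $2F-F_2$, the $F_1$, and the $\chi_1,\chi_2$ terms, while the $\xi$-directed contributions yield the $\exp(2w)F_2$ term together with the $\D w(\ff\cdot)$ corrections.

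I expect the only genuine obstacle to be organizational rather than conceptual: this is a long bookkeeping computation in which many expressions of the form $F(\cdots)$ with permuted, $\ff$-twisted, or $\xi$-inserted slots must be reduced via \eqref{F-prop} and \eqref{Fxieta} and then recombined. The points most prone to error are the signs arising from differentiating $\sinh(2v)$ and $\cosh(2v)$, the clean separation of the horizontal part from the $\xi$-directed part (which feeds $F_2$ and the $\D w$ terms), and the reduction of $\overline{g}\bigl(\ff\,Q(x,y),z\bigr)$, where $\ff$ must be transferred to the other slot before $\overline{g}$ is expanded.
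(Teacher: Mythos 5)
Your proposal is correct and follows essentially the same route as the paper: the paper also computes the new Levi-Civita connection from the Koszul formula \eqref{koszul} applied to $\overline{g}$, expresses the derivatives of $\overline{g}$ through $F$, $\D u$, $\D v$, $\D w$ (its intermediate identity \eqref{gbar} is exactly your $2\overline{g}\bigl(\n_xy+Q(x,y),z\bigr)$, with $\psi_1,\psi_2$, $F_3$, $F_4$ playing the roles of your scalar-derivative and $\n\ff$, $\n\eta$ contributions), and then substitutes into $\overline{F}(x,y,z)=\overline{g}\bigl((\overline{\n}_x\ff)y,z\bigr)$, transferring $\ff$ across $\overline{g}$ just as you describe. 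Your difference-tensor packaging $Q=\overline{\n}-\n$ is only a cosmetic reorganization of the same computation.
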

\begin{proof}[Proof]
The Koszul equality \eqref{koszul} for the Levi-Civita connection $\overline{\n}$ of $\overline{g}$,
\eqref{F=nfi}, \eqref{F-prop}, \eqref{Fxieta}, \eqref{einm} and \eqref{cct} yield
\begin{equation}\label{gbar}
\begin{array}{l}
    2\overline{g}\left(\overline{\n}_xy,z\right)=2\exp(2u)\bigl\{\cosh(2v)\,g(\n_xy,z)
    +\sinh(2v) \bigl[g(\n_xy,\ff z)+F_3(x,y,z)\bigr]\\[4pt]
    \phantom{2\overline{g}\left(\overline{\n}_xy,z\right)=2\exp(2u)\bigl\{}
    +\psi_1(x)g(\ff y,\ff z)+\psi_1(y)g(\ff x,\ff z)-\psi_1(z)g(\ff x,\ff y)
\\[4pt]
    \phantom{2\overline{g}\left(\overline{\n}_xy,z\right)=2\exp(2u)\bigl\{}
    + \psi_2(x)g(y,\ff z)+\psi_2(y)g(x,\ff z)-\psi_2(z)g(x,\ff y)\bigr\}
\\[4pt]
    \phantom{2\overline{g}\left(\overline{\n}_xy,z\right)=}
    +\left\{\exp(2w)-\exp(2u)\cosh(2v)\right\}\bigl\{2\eta(\n_xy)\eta(z)+F_4(x,y,z)\bigr\}
\\[4pt]
    \phantom{2\overline{g}\left(\overline{\n}_xy,z\right)=}
        +2\exp(2w)\left\{\eta(y)\eta(z)\D w(x)+\eta(x)\eta(z)\D w(y)-\eta(x)\eta(y)\D w(z)\right\},
\end{array}
\end{equation}
where $\psi_1=\cosh(2v)\D u+\sinh(2v)\D v$, $
\psi_2=\cosh(2v)\D v+\sinh(2v)\D u$,
\[
\begin{array}{l}
F_3(x,y,z)=\frac12\left\{F(x,y,z)+F(y,x,z)-F(z,x,y)\right\},\\[4pt]
F_4(x,y,z)=\left[F(z,\ff y,\xi)-F(y,\ff z,\xi)\right]\eta(x)+\left[F(z,\ff x,\xi)-F(x,\ff z,\xi)\right]\eta(y)\\[4pt]
\phantom{F_3(x,y,z)=}
-\left[F(x,\ff y,\xi)+F(y,\ff x,\xi)\right]\eta(z).
\end{array}
\]

The form of \eqref{ff} follows from \eqref{F=nfi} and \eqref{gbar}.
\end{proof}

When we substitute \eqref{cct} into \eqref{defsl}, we obtain the para-Sasaki-like condition for the metric $\overline g$ as follows
\begin{equation}\label{defbar}
\begin{array}{l}
\overline F(x,y,z)=-\exp(w+2u)\bigl\{\cosh(2v)\left[\eta(z)g(\ff x,\ff y)+\eta(y)g(\ff x,\ff z)\right]\\[4pt]
\phantom{\overline F(x,y,z)=-\exp(w+2u)\bigl\{}
+\sinh(2v)\left[\eta(z)g(x,\ff y)+\eta(y)g(x,\ff z)\right]\bigr\}.
\end{array}
\end{equation}
Now we substitute \eqref{defsl} into \eqref{ff} to get
\begin{equation}\label{ff1}
\begin{array}{l}
   \overline{F}(x,y,z)=\exp(2w)\eta(x)\left\{\eta(y)\D w(\ff z)+\eta(z)\D w(\ff y)\right\}
\\[4pt]
    \phantom{\overline{F}(x,y,z)=}
    -\exp(2u)\Bigl\{
    \bigl[\cosh(2v)\eta(z)+\chi_1(z)\bigr]g(\ff x,\ff y)+\bigl[\cosh(2v)\eta(y)+\chi_1(y)\bigr]g(\ff x,\ff z)
\\[4pt]
    \phantom{\overline{F}(x,y,z)=}
    \phantom{-\exp(2u)\Bigl\{\ }%
    +\bigl[\sinh(2v)\eta(z)+\chi_2(z)\bigr]g(x,\ff y)+\bigl[\sinh(2v)\eta(y)+\chi_2(y)\bigr]g(x,\ff z)\Bigr\}.
\end{array}
\end{equation}
%
Then,  \eqref{defbar} and \eqref{ff1} imply
\begin{equation}\label{ff2}
\begin{array}{l}
    \{\exp(w)-1\}\exp(2u)\bigl\{
    \cosh(2v)\left[\eta(z)g(\ff x,\ff y)+\eta(y)g(\ff x,\ff z)\right]\\[4pt]
   \phantom{(1-\exp(w))\exp(2u)\bigl\{}
    +\sinh(2v)\left[\eta(z)g(x,\ff y)+\eta(y)g(x,\ff z)\right]\bigr\}
\\[4pt]
    +\exp(2u)\bigl\{%
    \chi_1(z) g(\ff x,\ff y)
    +\chi_1(y) g(\ff x,\ff z)
    +\chi_2 (z) g(x,\ff y)
    +\chi_2(y) g(x,\ff z)
    \bigr\}
\\[4pt]
    +\exp(2w)\eta(x)\left[\eta(y)\D w(\ff z)+\eta(z)\D w(\ff y)\right]
    =0.
\end{array}
\end{equation}
Set $x=y=\xi$ into \eqref{ff2} to get
\begin{equation}\label{www}
\D w(\ff z)=0.
\end{equation}
 Now, applying \eqref{www} we rewrite \eqref{ff2} in  the form
\begin{equation}\label{ff3}
\vartheta_1(z)g(\ff x,\ff y)+\vartheta_2(z)g(x,\ff y)+\vartheta_1(y)g(\ff x,\ff z)+\vartheta_2(y)g(x,\ff z)=0,
\end{equation}
where the 1-forms $\vartheta_1$ and $\vartheta_2$ are defined by
\begin{equation}\label{ggg}
\vartheta_1(z)=[\exp(w)-1]\cosh(2v)\eta(z)+\chi_1(z),\qquad
\vartheta_2(z)=[\exp(w)-1]\sinh(2v)\eta(z)+\chi_2(z).
\end{equation}
Taking the trace of \eqref{ff3} with respect to $x=e_i$, $z=e_i$
and $y= e_i$, $z=e_i$ to get the following system
\begin{equation}\label{ff4}
 2(n+1)\vartheta_1(z)-\eta(z)\vartheta_1(\xi)
+\vartheta_2(\ff z)=0, \qquad
    \vartheta_1(z)-\eta(z)\vartheta_1(\xi) +\vartheta_2(\ff z)=0.
\end{equation}
We obtain from \eqref{ff4} that $\vartheta_1=0$ and $\vartheta_2\circ\phi=0$.
Additionally, by the trace of  \eqref{ff3} with respect to $x=\phi e_i$, $y=e_i$
we obtain
the vanishing of $\vartheta_2$, too. 
Therefore, \eqref{ggg} imply
\begin{equation}\label{ggg1}
\chi_1(z)=[1-\exp(w)]\cosh(2v)\eta(z),\qquad
\chi_2(z)=[1-\exp(w)]\sinh(2v)\eta(z).
\end{equation}

Then, comparing \eqref{defsl} and \eqref{ff} we derive
\begin{prop}\label{prop:Sasaki}
Let $\M$ be a para-Sasaki-like Riemannian
manifold. Then the structure $(\ff,
\overline{\xi},\overline{\eta},\allowbreak{}\overline g)$ defined by
\eqref{cct} is para-Sasa\-ki-like if and only if the smooth functions
$u,v,w$ satisfy the following conditions
\begin{equation}\label{sssl}
dw\circ\ff=0,\quad \D u-\D v\circ\ff=0,\quad \D u\circ\ff-\D v=[1-\exp(w)]\eta.
\end{equation}
Consequently we have
\[
\D u(\xi)=0,\qquad \D v(\xi)=\exp(w)-1.
\]
In the case $w=0$, the global smooth functions $u$ and $v$ do not
depend on $\xi$ and they are locally defined on the 
paracomplex submanifold
$N^{2n}$, $TN^{2n}=H$. Then, the paracomplex-valued function $u+e\,v$, where $e^2=1$,
is a paraholomorphic function on $N^{2n}$. 
\end{prop}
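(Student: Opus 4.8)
The plan is to take the relation \eqref{ggg1} as the starting point, since it has already been extracted from imposing the para-Sasaki-like condition on $\overline g$, and to unwind it into the three differential equations \eqref{sssl}. The first of those, $\D w\circ\ff=0$, is already in hand as \eqref{www}, so the remaining work is to analyse \eqref{ggg1} together with the explicit formulas for $\chi_1$ and $\chi_2$ recorded in \lemref{lemma-F}.

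First I would substitute those formulas for $\chi_1(z)$ and $\chi_2(z)$ into \eqref{ggg1}. Collecting the combinations $\D u(\ff z)-\D v(z)$ and $\D v(\ff z)-\D u(z)$ turns the pair into a linear system in these two unknowns whose coefficient matrix has $\cosh(2v)$ on the diagonal and $\sinh(2v)$ off it, hence determinant $\cosh^2(2v)-\sinh^2(2v)=1$. Solving it — eliminating once to isolate $\D u(\ff z)-\D v(z)$ and once to isolate $\D v(\ff z)-\D u(z)$ — yields exactly $\D u\circ\ff-\D v=[1-\exp(w)]\eta$ and $\D u-\D v\circ\ff=0$, which together with \eqref{www} are the conditions \eqref{sssl}. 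For the converse I would observe that each step from the para-Sasaki-like condition \eqref{defbar} down to \eqref{ggg1} is an equivalence: in particular the trace argument characterises \eqref{ff3} by the simultaneous vanishing of $\vartheta_1$ and $\vartheta_2$ rather than merely implying it, so substituting \eqref{sssl} back reproduces \eqref{defbar}. This makes the statement a genuine ``if and only if''.

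The two displayed consequences then follow by evaluating \eqref{sssl} at $\xi$. Putting $z=\xi$ in $\D u-\D v\circ\ff=0$ and using $\ff\xi=0$ from \eqref{str} gives $\D u(\xi)=0$; putting $z=\xi$ in $\D u\circ\ff-\D v=[1-\exp(w)]\eta$ and using $\ff\xi=0$ together with $\eta(\xi)=1$ gives $-\D v(\xi)=1-\exp(w)$, that is $\D v(\xi)=\exp(w)-1$.

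Finally, in the case $w=0$ the third equation of \eqref{sssl} collapses to $\D u\circ\ff=\D v$ while the second reads $\D v\circ\ff=\D u$, and the consequences give $\D u(\xi)=\D v(\xi)=0$; thus $u$ and $v$ are annihilated by $\xi=\partial_t$ and descend to functions on the paracomplex slice $N^{2n}$ with $TN^{2n}=H$. On $H$ one has $\ff=P$, so the two equations become the para-Cauchy--Riemann system $\D u\circ P=\D v$, $\D v\circ P=\D u$, which is precisely the statement that $\D(u+e\,v)$ is $P$-linear in the paracomplex sense, i.e.\ that $u+e\,v$ is paraholomorphic on $N^{2n}$. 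I expect the only real subtlety to be checking the reversibility of the trace step so that the implication is truly an equivalence, and being careful with the sign convention in the para-Cauchy--Riemann equations; the rest is the invertibility of the hyperbolic $2\times2$ system and a direct reading off of \eqref{sssl}.
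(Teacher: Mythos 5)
Your proposal is correct and follows essentially the same route as the paper: the paper's proof likewise takes \eqref{www} as the first condition of \eqref{sssl}, obtains the other two by solving the linear system \eqref{ggg1} in the unknowns $\D u\circ\ff-\D v$ and $\D v\circ\ff-\D u$ (with determinant $\cosh^2(2v)-\sinh^2(2v)=1$), and then specialises to $w=0$ to conclude that $u+e\,v$ is paraholomorphic on $N^{2n}$. Your extra care about the reversibility of the chain from \eqref{defbar} through \eqref{ff2}, \eqref{ff3} to \eqref{ggg1} only makes explicit what the paper leaves implicit in claiming the equivalence.
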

\begin{proof}[Proof]
The equality \eqref{www} is the first part of \eqref{sssl}.
Solving the linear system \eqref{ggg1}, we obtain the second and the third
equality in \eqref{sssl}.
In the case $w=0$, we get from \eqref{sssl} the following
\[
\D u-\D v\circ\ff=0,\qquad \D u\circ\ff-\D v=0,
\]
which shows that the paracomplex function $u+e\,v$ on $N^{2n}$ is paraholomorphic.
\end{proof}

\subsection{Paracontact homothetic transformations}
We consider para\-con\-tact homothetic transformations of a para-Sasaki-like  Riemannian manifold $\M$. Since the functions $u$,
$v$, $w$ are constant, it follows from \eqref{cct} using the
Koszul formula and \eqref{gbar}
that the Levi-Civita connections
$\overline{\n}$ and $\n$ of the metrics $\overline g$ and $g$,
respectively, are related by the formula
\begin{equation}\label{barl}
\begin{array}{l}
\overline{\n}_xy=\n_xy-\exp(2u-2w)\sinh(2v)\,g(\ff x,\ff y)\xi
+\left[1-\exp(2u-2w)\cosh(2v)\right] g(x,\ff y)\xi.
\end{array}
\end{equation}

Using \eqref{barl}, we obtain the next relation between  the corresponding curvature tensors $\overline R$ and $R$
\begin{equation}\label{barRR}
\begin{array}{l}
\overline{R}(x,y)z={R}(x,y)z+\left\{1-\exp(2u-2w)\cosh(2v)\right\}\left\{g(\ff y,\ff z)\eta(x)\xi-g(\ff x,\ff z)\eta(y)\xi\right.\\[4pt]
\phantom{\overline{R}(x,y)z={R}(x,y)z+\left\{1-\exp(2u-2w)\cosh(2v)\right\}}
\left.
\,{+}\,g(y,\ff z)\ff x-g(x,\ff z)\ff y\right\}\\[4pt]
\phantom{\overline{R}(x,y)z=}
-\exp(2u-2w)\sinh(2v) %
\left\{g(y,\ff z)\eta(x)\xi-g(x,\ff z)\eta(y)\xi
+g(\ff y,\ff z)\ff x-g(\ff x,\ff z)\ff y\right\}\\[4pt]
\phantom{\overline{R}(x,y)z=}

\end{array}
\end{equation}
\begin{prop}\label{homric}
The Ricci tensor of a para-Sasaki-like Riemannian manifold
is invariant under a para\-con\-tact homothetic transformation,
\begin{equation}\label{ri}
\overline{Ric}=Ric.
\end{equation}
Moreover, we get
\begin{equation}\label{scalsas}
\begin{split}
\overline{Scal}&=\exp(-2u)\cosh(2v)Scal-\exp(-2u)\sinh(2v)Scal^*
-2n\left\{\exp(-2w)-\exp(-2u)\cosh(2v)\right\},\\[4pt]
\overline{Scal}^*&=\exp(-2u)\cosh(2v)Scal^*-\exp(-2u)\sinh(2v)Scal.
\end{split}
\end{equation}
\end{prop}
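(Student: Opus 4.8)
The plan is to read off both the Ricci tensor and the scalar curvatures directly from the curvature relation \eqref{barRR}, exploiting the fact that the type $(1,3)$ curvature tensor, and hence the Ricci tensor regarded as the trace of an endomorphism, does not depend on the metric used to form the trace. Since $Ric(y,z)=\tr\bigl(x\mapsto R(x,y)z\bigr)$ and $\overline{Ric}(y,z)=\tr\bigl(x\mapsto\overline R(x,y)z\bigr)$, I would obtain $\overline{Ric}$ simply by contracting the first argument in \eqref{barRR}, a contraction into which $\overline g$ never enters. This immediately reduces the first assertion \eqref{ri} to a trace computation on the correction terms.

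I would then carry out that contraction term by term. Writing $A=1-\exp(2u-2w)\cosh(2v)$ and $B=\exp(2u-2w)\sinh(2v)$, each of the eight correction endomorphisms in \eqref{barRR} is either a rank-one map of the form (vector)$\,\otimes\,$(covector) or a scalar multiple of $\ff$, so its trace follows from the elementary identities $\tr\ff=0$, $\ff\xi=0$, $\eta\circ\ff=0$, $g(\xi,\xi)=1$ and $\ff^2=\id-\eta\otimes\xi$ of \eqref{str}. I expect the four terms carrying the factor $A$ to sum to zero, and likewise the four carrying $B$, leaving exactly $\overline{Ric}=Ric$.

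For the scalar curvatures I would pass to the $g$-self-adjoint endomorphism $Q$ defined by $\overline g(x,y)=g(Qx,y)$; from \eqref{cct}, together with the $g$-self-adjointness of $\ff$, one gets $Qx=\exp(2u)\cosh(2v)\,x+\exp(2u)\sinh(2v)\,\ff x+\{\exp(2w)-\exp(2u)\cosh(2v)\}\eta(x)\xi$. Here $\xi$ is an eigenvector of $Q$ with eigenvalue $\exp(2w)$, while on $H$ one has $Q=\exp(2u)\cosh(2v)\,\id+\exp(2u)\sinh(2v)\,\ff$ with $\ff^2=\id$, so $Q^{-1}$ is written down at once. Introducing the Ricci operator $\varrho$ by $g(\varrho x,y)=Ric(x,y)$, a $g$-orthonormal frame shows that $\overline g^{-1}$ is represented by $Q^{-1}$, whence $\overline{Scal}=\tr(Q^{-1}\varrho)$ and $\overline{Scal}^*=\tr(Q^{-1}\ff\varrho)$. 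Expanding these using $\tr\varrho=Scal$, $\tr(\ff\varrho)=Scal^*$, together with $Ric(\xi,\xi)=-2n$ from \eqref{ricxi}, should produce \eqref{scalsas}.

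The main obstacle I anticipate is the bookkeeping in this last step, specifically keeping the vertical direction separate from $H$. Both the term $\{\exp(2w)-\exp(2u)\cosh(2v)\}\,\eta\otimes\xi$ appearing in $Q$ and the identity $\ff^2=\id-\eta\otimes\xi$ (which reappears when $Q^{-1}$ is composed with $\ff$) feed a contribution proportional to the vertical value $Ric(\xi,\xi)=-2n$ into the traces. It is precisely this vertical term, evaluated through \eqref{ricxi}, that generates the constant $-2n\{\exp(-2w)-\exp(-2u)\cosh(2v)\}$ in $\overline{Scal}$, so the computation must track the $\xi$-component of $\varrho$ throughout and must not discard the $\eta\otimes\xi$ pieces prematurely.
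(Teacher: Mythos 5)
Your proof of \eqref{ri} is correct and is precisely the paper's own argument: the paper, too, obtains it by taking the trace of \eqref{barRR}, and your remark that this trace is a metric-free contraction of an endomorphism is the cleanest justification. The cancellations you anticipate do occur: with $A=1-\exp(2u-2w)\cosh(2v)$ and $B=\exp(2u-2w)\sinh(2v)$, the four $A$-terms trace to $g(\ff y,\ff z)+0+0-g(\ff y,\ff z)=0$ and the four $B$-terms to $g(y,\ff z)+0+0-g(y,\ff z)=0$, using $\tr\ff=0$, $\ff\xi=0$, $\eta\circ\ff=0$ and the $g$-self-adjointness of $\ff$. Your operator $Q$ and its inverse are also correct, as are the identities $\overline{Scal}=\tr(Q^{-1}\varrho)$ and $\overline{Scal}^*=\tr(Q^{-1}\ff\varrho)$; expanding the first of these does give the first line of \eqref{scalsas}, with the vertical term producing $-2n\bigl\{\exp(-2w)-\exp(-2u)\cosh(2v)\bigr\}$ exactly as you say.

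The gap is in the last step, which you assert rather than perform: the expansion does \emph{not} reproduce the second line of \eqref{scalsas}. Since $\eta\circ\ff=0$ and $\ff^2=\id-\eta\otimes\xi$, one has
\begin{equation*}
Q^{-1}\ff=\exp(-2u)\bigl[\cosh(2v)\,\ff-\sinh(2v)\bigl(\id-\eta\otimes\xi\bigr)\bigr],
\end{equation*}
and therefore, using $\tr(\ff\varrho)=Scal^*$, $\tr\varrho=Scal$ and $\tr\bigl((\eta\otimes\xi)\circ\varrho\bigr)=Ric(\xi,\xi)=-2n$ from \eqref{ricxi},
\begin{equation*}
\overline{Scal}^*=\exp(-2u)\cosh(2v)\,Scal^*-\exp(-2u)\sinh(2v)\,\bigl(Scal+2n\bigr),
\end{equation*}
which differs from the second line of \eqref{scalsas} by the summand $-2n\exp(-2u)\sinh(2v)$. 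This is not a flaw of your method but of the stated formula: on Example 1 with $n=1$ one has $Scal=-2$, $Scal^*=0$, and computing the Levi-Civita connection of $\overline g$ directly from the Koszul formula yields $\overline{Scal}^*=0$, in agreement with the corrected expression and in conflict with \eqref{scalsas}, which would give $2\exp(-2u)\sinh(2v)\neq 0$ whenever $v\neq 0$. So your write-up establishes \eqref{ri} and the first half of \eqref{scalsas}, but at the precise point where you defer the computation the claimed identity breaks down; a complete treatment must either carry the extra vertical term or flag the second formula of the proposition as erroneous.
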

\begin{proof}[Proof]
We get  \eqref{ri} by taking the trace of \eqref{barRR}.  Consequently, the traces in \eqref{ri} imply \eqref{scalsas}.
\end{proof}
\begin {corr} Note that under  a paracontact homothetic transformation of a para-Sasaki-like Riemannian manifold the obtained space is not, in general, again para-Sasaki-like.  Indeed  the condition \eqref{sssl} is not true for constants $u,v,w\not=0$ and it is satisfied for constants $u,v,w=0$.
\end{corr}
Using Proposition~\ref{homric}  we can make Proposition~\ref{einstein}  a little bit stronger as follows
\begin{prop}
A para-Sasaki-like Riemannian  manifold $\M$ is paracontact homothetic to an Einstein para-Sasaki-like Riemannian
 manifold if and only if the underlying phpcR manifold $(N^{2n}, TN^{2n}=H,P,h)$  is an Einstein manifold with negative scalar curvature.
\end{prop}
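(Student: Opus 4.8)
The plan is to derive this as a refinement of Proposition~\ref{einstein}, using two ingredients: the characterisation of the Einstein condition recorded there in \eqref{ein0}, and the invariance of the Ricci tensor under paracontact homothetic transformations established in Proposition~\ref{homric}. First I would isolate which paracontact homothetic transformations \eqref{cct} keep the structure para-Sasaki-like. By Proposition~\ref{prop:Sasaki} these are exactly the ones whose (constant) parameters satisfy \eqref{sssl}; for constant $u,v,w$ the third relation in \eqref{sssl} forces $\exp(w)=1$, i.e. $w=0$, and the essential sub-family is the pure rescaling $v=w=0$ with $u$ constant. For such a transformation \eqref{cct} collapses to $\overline g=\exp(2u)\,g+\{1-\exp(2u)\}\,\eta\otimes\eta$, so $\overline\xi=\xi$, $\overline\eta=\eta$, the horizontal distribution $H=\ker\eta$ is preserved, and the horizontal metric is scaled by a constant, $\overline h=\overline g|_H=\exp(2u)\,h$. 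Because the factor is constant, $\n^{h}$ is also the Levi-Civita connection of $\overline h$, whence $Ric^{\overline h}=Ric^{h}$ as $(0,2)$-tensors while the scalar curvature is divided by $\exp(2u)$.

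For the ``if'' direction, assume $(N^{2n},P,h)$ is Einstein with negative scalar curvature, say $Ric^{h}=\mu\,h$ with $\mu<0$. I would apply the rescaling above with $\exp(2u)=-\mu/(2n)$, which is positive precisely because $\mu<0$. Then $\overline h=\exp(2u)\,h$ and $Ric^{\overline h}=Ric^{h}=\mu\,h=\mu\exp(-2u)\,\overline h=-2n\,\overline h$, so the transformed underlying manifold satisfies \eqref{ein0}. As the parameters $u$ constant, $v=w=0$ satisfy \eqref{sssl}, the transformed structure $(M,\ff,\overline\xi,\overline\eta,\overline g)$ is still para-Sasaki-like, and Proposition~\ref{einstein} then shows it is Einstein, necessarily with negative scalar curvature $-2n(2n+1)$.

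For the ``only if'' direction, suppose $\M$ is paracontact homothetic to an Einstein para-Sasaki-like manifold $(M,\ff,\overline\xi,\overline\eta,\overline g)$. Applying Proposition~\ref{einstein} to the target yields $Ric^{\overline h}=-2n\,\overline h$ for its horizontal metric $\overline h=\overline g|_H$. Para-Sasaki-preservation forces $w=0$ by Proposition~\ref{prop:Sasaki}, and in the relevant pure-rescaling case (see the closing remark) $\overline h=\exp(2u)\,h$. Combining the invariance $\overline{Ric}=Ric$ from \eqref{ri} with \eqref{ric}, valid on both structures and identifying the horizontal part of the total Ricci tensor with the Ricci tensor of the underlying phpcR metric, gives $Ric^{h}=Ric^{\overline h}=-2n\,\overline h=-2n\exp(2u)\,h$. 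Hence $(N^{2n},P,h)$ is Einstein with constant $-2n\exp(2u)<0$, i.e. has negative scalar curvature, completing the equivalence.

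The only routine checks are that \eqref{cct} with $u$ constant and $v=w=0$ satisfies \eqref{sssl}, and that a constant conformal rescaling leaves the $(0,2)$ Ricci tensor unchanged. The real point—and the step I would be most careful about—is the normalisation bookkeeping: Proposition~\ref{einstein} demands the exact value $-2n$ in \eqref{ein0}, and the whole content of the strengthening is that the single free constant $\exp(2u)$ can always move an arbitrary negative Einstein constant $\mu$ to $-2n$, which is possible exactly when $\mu<0$. A subtlety worth flagging is that para-Sasaki-preservation by itself only forces $w=0$ and still permits a constant $v\neq0$ mixing $h$ with $\widetilde h$; one must observe that it is the pure rescaling ($v=0$) that is relevant here, since a genuine $v\neq0$ transformation would only render the two local factors of the product $N^{2n}$ separately Einstein rather than $(N^{2n},P,h)$ itself.
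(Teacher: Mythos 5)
Your ``if'' half is correct and is, essentially line for line, the paper's own proof: the paper likewise fixes $v=w=0$, forms $\overline g=\exp(2u)\,g+\{1-\exp(2u)\}\,\eta\otimes\eta$, invokes Proposition~\ref{prop:Sasaki} to keep the structure para-Sasaki-like, identifies $\overline{Ric}^{\bar h}$ with $Ric^h$ (the paper does this via Proposition~\ref{homric} and \eqref{ric}, you do it via the equality of Levi-Civita connections under a constant rescaling --- the same fact), and then normalises by choosing $\exp(2u)=-Scal^h/(4n^2)$, which is exactly your $\exp(2u)=-\mu/(2n)$. It is worth noting that this single implication is all the paper's proof actually establishes; necessity is never addressed there.

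The genuine gap is in your ``only if'' direction, and your closing remark describes the problem rather than repairing it. The hypothesis hands you \emph{some} paracontact homothetic transformation whose target is Einstein and para-Sasaki-like; Proposition~\ref{prop:Sasaki} forces $w=0$, but nothing entitles you to assume $v=0$, and you may not replace the given transformation by a pure rescaling of your choice. If $v\neq0$, then $\overline h=\exp(2u)\bigl[\cosh(2v)\,h+\sinh(2v)\,\widetilde h\bigr]$, and your chain $Ric^{h}=\overline{Ric}|_H=Ric^{\overline h}=-2n\,\overline h$ yields $Ric^h=-2n\exp(2u)\bigl[\cosh(2v)\,h+\sinh(2v)\,\widetilde h\bigr]$, which is \emph{not} the Einstein condition for $h$: on the local product $N^{2n}=N_1^n\times N_2^n$ determined by $P$ it says precisely that the two factors are Einstein with the distinct negative constants $-2n\exp(2u+2v)$ and $-2n\exp(2u-2v)$. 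This case cannot be argued away, because the paper's own machinery realises it: take two Einstein manifolds with distinct negative constants $\mu_1\neq\mu_2$, form the phpcR product and its para-Sasaki-like extension via Theorem~\ref{ext}, and solve $\exp(2u+2v)=-\mu_1/(2n)$, $\exp(2u-2v)=-\mu_2/(2n)$; running your own ``if'' computation with these constants and citing Proposition~\ref{einstein} shows that this manifold \emph{is} paracontact homothetic to an Einstein para-Sasaki-like manifold, although its underlying phpcR manifold is not Einstein. So, within the framework of the paper, the ``only if'' implication is not merely unproved by your restriction to $v=0$ --- it fails, unless one either restricts the admissible transformations to pure rescalings or weakens ``Einstein'' to ``local product of two Einstein metrics with negative scalar curvatures''. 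Your instinct that something was off at exactly this point was right; the error was to classify it as a removable subtlety instead of an obstruction to the stated equivalence.
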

\begin{proof}[Proof]

We consider
a paracontact homothetic trans\-for\-ma\-tion with $v=w=0$. According to Proposition~\ref{prop:Sasaki}, the manifold
 $\bigl(M,\ff,\xi,\eta,\overline
g\bigr)$, where $\overline
g=\exp(2u)\,g+\{1-\exp(2u)\}\eta\otimes\eta$ is also a para-Sasaki-like Riemannian manifold. We get the following sequence of equalities applying Proposition~\ref{homric} and \eqref{ric}
\[
\overline{Ric}^{\bar h}=\overline{Ric}{|_H}=Ric|_H=Ric^h
=\frac{Scal^h}{2n}g|_H=\frac{\exp(-2u)Scal^h}{2n}\overline{g}|_H,
\]
implying that the underlying phpcR manifold  $(N^{2n}, TN^{2n}=H,P,\bar{h})$ is  an Einstein manifold with scalar curvature  $\overline{Scal}^{\bar h}=\exp(-2u)Scal^h$.  Since $Scal^h$ is negative, we can take $u=-\frac{1}{2}\ln\left(\frac{4n^2}{-Scal^h}\right)$ to get $\overline{Scal}^{\bar h}=-4n^2$ and Proposition~\ref{einstein} shows that  $\bigl(M,\ff,\xi,\eta,\overline g\bigr)$ is an Einstein para-Sasaki-like Riemannian manifold.
\end{proof}

Suppose we have a para-Sasaki-like Riemannian
manifold which is Einstein, $Ric=-2n\,g$, and make a paracontact
homothetic transformation
$
\overline{\eta}=\eta,\quad \overline{\xi}=\xi,\quad
\overline{g}(x,y) = p\, g(x,y)+q\, g(x,\ff y)+(1-p)\eta(x)\eta(y),
$
where $p$, $q$ are constants. Using Proposition~\ref{homric} 
we obtain that
\begin{equation}\label{etaein}
\begin{split}
\overline{Ric}(x,y)=Ric(x,y)=-2n\,g(x,y)
=-\frac{2n}{p^2-q^2}\left\{p\,\overline g(x,y) - q\,\overline
g(x,\ff y) +(p^2-q^2-p)\eta(x)\eta(y)\right\}.
\end{split}
\end{equation}
We call a para-contact paracomplex 
manifold whose Ricci tensor satisfies
\eqref{etaein}  \emph{an $\eta$-paracomplex-Ein\-stein} 
and if $q=0$, we have \emph{$\eta$-Ein\-stein para-Sasaki-like Riemannian  manifold}.
Thereby, we have shown the following
\begin{prop}
Any $\eta$-paracomplex-Einstein para-Sasaki-like Riemannian  space is paracontact
ho\-mo\-the\-tic to an Ein\-stein para-Sasaki-like Riemannian  space.
\end{prop}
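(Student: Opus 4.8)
The plan is to undo the homothetic transformation that produced the $\eta$-paracomplex-Einstein structure, exploiting that such transformations form a group and that the Ricci tensor is their invariant (Proposition~\ref{homric}). Let $\M$ be a para-Sasaki-like Riemannian manifold whose Ricci tensor has the $\eta$-paracomplex-Einstein form \eqref{etaein} for constants $p,q$ with $p>0$ and $p^2>q^2$; these inequalities are built into the structure, since \eqref{etaein} is exactly what one obtains by applying a transformation \eqref{cct} to an Einstein metric with $p=\exp(2u)\cosh(2v)$, $q=\exp(2u)\sinh(2v)$, so that $p^2-q^2=\exp(4u)>0$. I would then apply the paracontact homothetic transformation
\[
\overline{\eta}=\eta,\qquad \overline{\xi}=\xi,\qquad
\overline{g}(x,y)=p'\,g(x,y)+q'\,g(x,\ff y)+(1-p')\,\eta(x)\eta(y),
\]
with parameters $p'=\dfrac{p}{p^2-q^2}$ and $q'=-\dfrac{q}{p^2-q^2}$, and claim that $\bigl(M,\ff,\overline{\xi},\overline{\eta},\overline g\bigr)$ is the required Einstein para-Sasaki-like space.

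First I would verify that this is a legitimate transformation of the form \eqref{cct} with $w=0$. Writing $p'=\exp(2u)\cosh(2v)$ and $q'=\exp(2u)\sinh(2v)$ is solvable for real constants $u,v$ precisely because $p'>0$ and $(p')^2-(q')^2=\tfrac{1}{p^2-q^2}>0$, and this is exactly the positivity that makes $\overline g$ a Riemannian metric (all principal minors positive, as in the proof of \thmref{ext}). Since $w=0$ and $u,v$ are constant, the conditions \eqref{sssl} of \propref{prop:Sasaki} are satisfied identically (this is the case singled out in the Remark following \propref{homric}), so the transformed structure $\bigl(M,\ff,\overline{\xi},\overline{\eta},\overline g\bigr)$ is again para-Sasaki-like.

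It then remains to check the Einstein condition, and here the invariance $\overline{Ric}=Ric$ from \propref{homric} does the work: I would expand $-2n\,\overline g$ in the chosen parameters and compare it with \eqref{etaein} coefficientwise in the three tensors $g$, $g(\cdot,\ff\cdot)$ and $\eta\otimes\eta$, which close under $x\mapsto g(x,\ff\,\cdot)$ by $\ff^2=\id-\eta\otimes\xi$ from \eqref{str}. Matching the $g$- and $g(\cdot,\ff\cdot)$-coefficients forces exactly the values of $p'$ and $q'$ displayed above, and the $\eta\otimes\eta$-coefficient then matches automatically, owing to the specific term $(p^2-q^2-p)$ appearing in \eqref{etaein}; hence $\overline{Ric}=-2n\,\overline g$ and the space is Einstein. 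The only genuinely delicate point is the positivity bookkeeping of the second paragraph, namely ensuring that the inverse parameters still define a \emph{Riemannian} metric with $w=0$, which is precisely where the hypotheses $p>0$ and $p^2>q^2$ are used; everything else is linear algebra in the fixed triple of tensors and the invariance of the Ricci tensor.
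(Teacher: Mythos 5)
Your proof is correct and takes essentially the paper's own approach: the paper derives \eqref{etaein} by applying a $w=0$ paracontact homothetic transformation to an Einstein para-Sasaki-like metric and then declares the proposition shown, and you simply invert that computation, exhibiting the explicit inverse parameters $p'=p/(p^2-q^2)$, $q'=-q/(p^2-q^2)$ and using the same two ingredients, namely the Ricci invariance of Proposition~\ref{homric} and the preservation of the para-Sasaki-like condition via \eqref{sssl} with $w=0$ and constant $u,v$. Your care with the positivity bookkeeping $p>|q|$ (ensuring the inverse transformation is again of the form \eqref{cct} with $\overline g$ Riemannian) makes explicit a detail the paper leaves implicit, but it is the same argument.
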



\end{document}